\numberwithin{equation}{section}
\def\R{\mathbb R}
\def\Z{\mathbb Z}
\newcommand{\dist}{\mathop{\mathrm{dist}}}
\def\e{\varepsilon}
\def\spt{{\rm spt}}
\def\00{{\bf 0}}
\DeclareMathOperator*{\esslim}{ess\: lim}
\newcommand\res{\mathop{\hbox{\vrule height 7pt width .3pt depth 0pt \vrule height .3pt width 5pt depth 0pt}}\nolimits}
\def\spt{\mathrm{spt}}
\def\loc{\mathrm{loc}}
\newcommand{\norm}[1]{{\Vert #1\Vert}}
\newcommand{\abs}[1]{{\left\vert #1\right\vert}}
\newcommand{\one}{{\mathds{1}}}
\newtheorem{theorem}{Theorem} [section]
\newtheorem*{theorem*}{Theorem}
\newtheorem{lemma}[theorem]{Lemma}
\newtheorem{proposition}[theorem]{Proposition}
\theoremstyle{definition}
\newtheorem{definition}[theorem]{Definition}
\newtheorem{remark}[theorem]{Remark}
\newtheorem*{ack}{Acknowledgements}
\title[Eikonal]{Optimal Besov differentiability for entropy solutions of the Eikonal equation}
\author[F. Ghiraldin]{Francesco Ghiraldin}
\address{F.G.: Universit\"at Basel, Spiegelgasse 1, CH-4051 Basel, Switzerland}
\email{Francesco.Ghiraldin@unibas.ch}
\author[X. Lamy]{Xavier Lamy}
\address{X.L.:  Institut de Math\'ematiques de Toulouse, CNRS UMR 5219, Universit\'e Paul Sabatier, Toulouse, France}
\email{Xavier.Lamy@math.univ-toulouse.fr}
\begin{document}

\begin{abstract}
In this paper we study the Eikonal equation in a bounded planar domain. We prove the equivalence among optimal Besov regularity, the finiteness of 
every entropy production and the validity of a kinetic formulation.
\end{abstract}
\maketitle

\section{Introduction}

In this paper we study the Eikonal equation in a planar domain: 
given a bounded Lipschitz domain $\Omega\subset\R^2$    
 we consider solutions  $m:\Omega\rightarrow\R^2$ to the following constrained equation
\begin{equation}\label{eq:m}\tag{M}
|m|=1\quad\text{ a.e. in }\Omega, \qquad\nabla\cdot m = 0\quad \text{ in }\mathcal D '(\Omega).
\end{equation}
The eikonal equation \eqref{eq:m} is very flexible, and uniqueness or regularity cannot be expected for such weak solutions, 
even imposing a boundary datum (the equation, on simply connected domains, 
is equivalent to solving $|{  \nabla} u| = 1$ for a scalar function $u$, for which for instance the theory of 
viscosity solutions singles out a distinguished subclass of solutions). 

On the other hand, solutions to \eqref{eq:m} coming from physical models usually possess
 extra information that limit this flexibility. 
 This equation emerges in the description of several physical phenomena, collectively called line-energy  
Ginzburg-Landau models, that describe for instance smectic liquid crystals, soft ferromagnetic films, blister formations, 
and broadly speaking phase transition phenomena where the order parameter is a gradient \cite{DMKO01}. 
 
 For example one can consider the Aviles-Giga energy 
\begin{equation}\label{eq:ag}\tag{AG}
AG_\e(u_\e,\Omega):=\int_\Omega \e |{ \nabla}^2 u_\e|^2 + \frac{1}{\e}(1-|{ \nabla} u_\e|^2)^2 dx, \qquad\Omega\subset\R^d
\end{equation}
(with appropriate boundary conditions): this energy has been introduced in \cite{avilesgiga86} to study liquid crystal configurations, and 
in the two dimensional case was considered by Gioia and Ortiz as a model energy for the deformation of thin film blisters undergoing biaxial compression \cite{gioiaortiz94}. 
The functional \eqref{eq:ag} can be thought of as a vectorial Modica Mortola energy, where the fields are forced to be gradients; equivalently, $\e AG_\e$ can be seen 
as a singular perturbation (accounting for the bending energy of the film) of the elastic energy. Competition between these two terms favors 
concentration along the jump discontinuities of the limit gradient $\nabla u$, 
with a limit energy believed to be asymptotically 
\[
\frac 13 \int_{Jump(\nabla u)} \abs{\nabla^+ u - \nabla^- u}^3 d\mathcal H^1.
\]
Solutions to \eqref{eq:m} can be obtained from sequences $(u_\e)$ with equibounded energy $AG_\e(u_\e)\leq E$, 
by setting 
\begin{equation}\label{eq:limit}
m_\e :={  \nabla}^\perp u_\e,\qquad m_\e\rightarrow m,
\end{equation}
and observing that any pointwise limit $m$ satisfies 
the unitary constraint $|m|=1$ as well as the linear constraint $0 =\nabla\cdot{  \nabla}^\perp u_\e\rightarrow{ \nabla}\cdot m$ 
(see \cite{ambdelman99,DMKO01} for the precise compactness statements).

In \cite{jinkohn00}, Jin and Kohn studied the energy $AG_\e$ and its variations (under suitably boundary conditions), 
and discovered that the divergences ${  \nabla}\cdot \Sigma({ \nabla} u_\e)$ of 
suitable vectorial renormalizations of the gradient fields ${ \nabla} u_\e$ 
are measures providing nontrivial asymptotic  lower bounds for $AG_\e(u_\e)$. 
The explicit form of $\Gamma-\lim_\e AG_\e$ and its domain have been subject to intensive study, see \cite{ambdelman99,avilesgiga99,DMKO01,contidelellis07}, 
where partial results on the $\Gamma-\liminf$ already conjectured in \cite{avilesgiga86,jinkohn00} have been obtained. 

Unit vector fields $m$ obtained through the limit procedure \eqref{eq:limit} enjoy further regularity properties: they are entropy solutions. 
After recognizing that \eqref{eq:m} can be interpreted as a perturbation of Burgers' equation, 
in  \cite{DMKO01} the parallel between these vectorial renormalizations of the eikonal equation 
and entropy solutions of scalar conservation laws was pushed forward, 
and a family $ENT$ of entropies { $\Phi \colon S^1\rightarrow \R^2$} such that ${\nabla}\cdot \Phi(m)$ detect the singularities of $m$, 
has been singled out. It became therefore natural to study {\em entropy solutions} to \eqref{eq:m}, namely 
vector fields satisfying the further property that  
\begin{equation*}
{  \nabla}\cdot\Phi(m)\in\mathcal M(\Omega)\quad\forall\Phi\in ENT,
\end{equation*}
where $\mathcal M(\Omega)$ is the set of finite Radon measures on $\Omega$.
As in the case of hyperbolic conservation laws, such additional informations imply further regularity and compactness of the set of solutions,
in the spirit of Tartar's compensated compactness \cite{tartar79}. 

A quantitative statement of compactness, in the form of fractional differentiability was afterwards obtained, 
only for solutions of \eqref{eq:m} specifically arising as limits of ${ \nabla}^\perp u_\e$ \eqref{eq:limit}, by Jabin and Perthame in \cite{jabinperthame01}. They prove that such solutions satisfy a kinetic formulation:
the equilibrium function (Maxwellian) 
\begin{equation*}
\chi(x,\xi)=\one_{m(x)\cdot\xi>0},
\end{equation*}
defined for $\xi\in\R^2$, solves a transport equation of the form
\begin{equation}\label{eq:kinJP}
\xi\cdot\nabla_x\chi(x,\xi)=\partial_{\xi_1}\sigma_1 + \partial_{\xi_2}\sigma_2 +\sigma_3,
\end{equation}
for some locally finite measures $\sigma_\ell$, see
 \cite[Theorem 1.1]{jabinperthame01}. 
Recall that in the realm of scalar conservation law, the validity of a kinetic formulation is equivalent to the finiteness of all entropy productions \cite{perthame02}.  
With the help of methods coming from velocity averaging, the authors of \cite{jabinperthame01} are able to prove that such 
solutions possess some fractional differentiability: $m \in W^{\frac 15 -,\frac 53 -}_{\rm loc}$; 
better Sobolev regularity $W^{\frac 13 -,\frac 32 -}_{\rm loc}$ was established by the same authors in a subsequent work \cite{jabinperthame02}. 
Examples by De Lellis and Westdickenberg  \cite{delelliswestdickenberg03} show that this regularity is optimal in the number of derivatives ($1/3$) but leave room for improving the integrability.

Similar results hold for weak solutions of Burgers' equation
$\partial_t u +\partial_x \frac 12 u^2 =0$ whose entropy productions are finite measures (but may change sign). This should come as no surprise since Burgers' equation formally arises when considering solutions of \eqref{eq:m} which are small perturbations of the constant solution $m_0=(1,0)$ (see e.g. the discussion in \cite[p.143]{ottosteiner10}). In the case of Burgers' equation, solutions with finite entropy production are shown by Golse and Perthame \cite{golseperthame13} to lie in $B^{1/3}_{3,\infty}$, which is the optimal regularity according to \cite{delelliswestdickenberg03}.

We wish to mention also a similar model arising in the theory of micromagnetism and studied by Rivi\`ere and Serfaty in \cite{riviereserfaty01,riviereserfaty03}. There, solutions of \eqref{eq:m} also appear as limits of sequences with bounded energy depending on a parameter $\e$, and they enjoy a kinetic formulation.
In that model the unit constraint is imposed already at the $\e$ level, thus enforcing a topological restriction, while the divergence free condition
 is only reached in the limit, via the penalization of a nonlocal term. 
This feature makes the limit problem quite different from ours (motivated by the Aviles-Giga functional): there, the field $m_\e$ possesses an $H^1$ lifting $e^{i\varphi}$ (excluding vortices at the $\e$ level - Bloch lines), that enables the use of a convenient family of entropies which control jumps of the angle $\varphi$.  For this model, a quite thorough study of the rectifiability properties of entropy solutions has been carried out in  \cite{AKLR02}. Similar rectifiability properties were then obtained for the present model \cite{delellisotto03} and for higher dimensional scalar conservation laws \cite{delellisottowestdickenberg03}.
An interesting and more sophisticated model describing (almost horizontal) 
micromagnetism in three dimensions, exhibiting different types of
transition layers (one and two dimensional N\'eel walls and Bloch lines) has been considered in \cite{alougesriviereserfaty02}.

Another distinguished subset of solutions to \eqref{eq:m} are 
the so-called zero energy states, for which the field $m$ is again as in \eqref{eq:limit}  
 with the additional property that $\lim_\e AG_\e(u_\e)= 0$. 
Such solutions have no entropy production: ${  \nabla}\cdot\Phi(m)=0$ for all $\Phi\in ENT$. This yields stronger regularity and rigidity properties, as shown by Jabin, Otto and Perthame \cite{jabinottoperthame02}:  $m$ is locally Lipschitz outside 
a locally finite set of points (the {\em vortices}, that asymptotically carry no energy), and in any convex neighborhood of one of them (say $p$), 
it holds $m(x) =\pm\frac{(x-p)^\perp}{|x-p|}$ (see also \cite{bochardignat17} for similar results in higher dimensions). Recently Lorent and Peng \cite{lorentpeng17} showed that the vanishing of only two particular entropy productions (instead of all $\Phi\in ENT$) is needed to obtain this conclusion. 
An indication on the  minimal regularity of $m$ needed to trigger such an improvement 
was further studied by De Lellis and Ignat in \cite{delellisignat15}, where it is proved that if $m \in W^{\frac 13,3}_{{\rm loc}}$ then 
there is no entropy production. 
The $\frac 13$ differentiability exponent is somehow critical in several problems, notably 
the problem of energy conservation for the Euler equations (Onsager conjecture) \cite{constantinetiti94,CCFS08,delellisszekelyhidisurvey}.

In this article we prove the following (see Theorem~\ref{t:main} and Section~\ref{s:main} for the precise definitions): 
\begin{theorem*}
Let $m$ satisfy \eqref{eq:m}. The following three conditions are equivalent:
\begin{itemize}
\item[(i)] $m$ has locally finite entropy production;
\item[(ii)] $m$ satisfies a kinetic formulation;
\item[(iii)] $m\in B^{1/3}_{3,\infty, \loc}(\Omega)$.
\end{itemize}
\end{theorem*}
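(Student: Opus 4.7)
The natural strategy is to prove the equivalences in a cycle, with the cleanest pair being (i) $\Leftrightarrow$ (ii), the implication (iii) $\Rightarrow$ (i) following a Constantin--E--Titi / commutator argument, and the implication (ii) $\Rightarrow$ (iii) being the hardest, modeled on Golse--Perthame's work on Burgers' equation.

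\textbf{Step 1: (i) $\Leftrightarrow$ (ii).} The plan is to transcribe the Perthame-type correspondence between entropies and kinetic formulations to our setting. The class $ENT$ in \cite{DMKO01} admits a parametrization by $\xi\in \SS^1$ (or more generally by a kinetic variable), so that computing $\nabla\cdot\Phi_\xi(m)$ amounts to testing the transport equation for $\chi(x,\xi)=\one_{m(x)\cdot\xi>0}$ against the corresponding elementary entropy. Going from (ii) to (i) is direct: integrate the kinetic identity against the kernel that produces a given $\Phi\in ENT$, the right-hand side being a finite combination of the measures $\sigma_\ell$. Going from (i) to (ii) requires writing $\xi\cdot\nabla_x\chi$ as a distribution on $\Omega\times\R^2$ and recognizing, via duality with the family of elementary entropies, that it factors as a divergence of locally finite measures in the variables $\xi$ plus a measure in $x$; the structure theorem for distributions that are continuous in a dense family of test functions controlled by measure norms does the job.

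\textbf{Step 2: (iii) $\Rightarrow$ (i).} Here I would regularize $m$ by a standard mollifier $\rho_\e$ and, for an entropy $\Phi\in ENT$, estimate the total variation of $\nabla\cdot\Phi(m*\rho_\e)$ uniformly in $\e$. The identity
\[
\nabla\cdot\Phi(m*\rho_\e)=\bigl(\nabla\Phi(m*\rho_\e)-(\nabla\Phi(m))*\rho_\e\bigr):\nabla(m*\rho_\e)+(\nabla\cdot\Phi(m))*\rho_\e
\]
is not quite available since $\nabla\cdot\Phi(m)$ is unknown, but a standard commutator trick (as in Constantin--E--Titi or in De Lellis--Ignat \cite{delellisignat15}) rewrites the relevant quantity purely in terms of finite differences $m(x+h)-m(x)$. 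Using that $|m|=1$, $\nabla\cdot m=0$ and that $\Phi$ is, up to lower order, cubic in $m$ modulo the constraint, the commutator is controlled by $\e^{-1}\bigl\||m(\cdot+h)-m(\cdot)|^3\bigr\|_{L^1}$ integrated against $|h|\leq\e$, which by the Besov hypothesis is $\OO(1)$. A weak-$*$ compactness argument then produces the limiting measure $\nabla\cdot\Phi(m)$.

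\textbf{Step 3: (ii) $\Rightarrow$ (iii).} This is the technical core and the main obstacle. The approach is the one pioneered by Golse--Perthame \cite{golseperthame13}: estimate $\|m(\cdot+h)-m(\cdot)\|_{L^3}^3$ directly from the kinetic equation. One writes
\[
|m(x+h)-m(x)|^3\;\sim\;\int_{\SS^1} \bigl|\chi(x+h,\xi)-\chi(x,\xi)\bigr|\,\bigl|\xi\cdot (m(x+h)-m(x))\bigr|^2\,\d\xi
\]
(modulo constants tied to the $1\text{D}$ formula $|a-b|=\int \one_{\{a\wedge b\leq t\leq a\vee b\}}\,\d t$ applied along directions), and splits the $\xi$-integral into a region where transport along $\xi$ sends $x$ to $x+h$ in time of order $|h|$ and a region where it does not. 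On the transport region, integrating the kinetic equation along characteristics trades the difference $\chi(x+h,\xi)-\chi(x,\xi)$ against $|h|^{1/3}\cdot\text{measure mass}^{1/3}$ via a Hölder duality, producing the desired $|h|$ bound after integration. The $B^{1/3}_{3,\infty}$-scaling $\|m(\cdot+h)-m(\cdot)\|_{L^3}^3\lesssim|h|$ follows. The obstacle, compared to the $1\text{D}$ Burgers setting, is that the kinetic variable $\xi$ ranges over $\R^2$, the Maxwellian is a characteristic function of a half-plane rather than an interval, and the transport operator $\xi\cdot\nabla_x$ degenerates as $\xi\to 0$; one must truncate in $|\xi|$, localize carefully in $\Omega$, and balance the resulting dispersive and concentration terms, which is where most of the work will concentrate.
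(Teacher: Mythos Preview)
Your cycle is the paper's, and Step~2 is essentially its argument (the identity $\nabla\cdot\widetilde\Phi(m_\e)=\Psi(m_\e)\cdot\nabla(1-|m_\e|^2)$ from \cite{DMKO01,delellisignat15}, together with the Besov-controlled bounds $\|\nabla m_\e\|_{L^3}\lesssim \e^{-2/3}N_\e(m)$ and $\|1-|m_\e|^2\|_{L^{3/2}}\lesssim \e^{2/3}N_\e(m)^2$). Step~1 is in the right spirit but underspecified: the passage from ``each $\nabla\cdot\Phi(m)$ is a measure'' to ``there exists a single kinetic measure $\sigma$'' is a Banach--Steinhaus argument applied to the linear map $f\mapsto\Phi_f\in ENT$ (built by integrating $f\in C^0(\SS^1)$ twice), and one then obtains the clean form $e^{is}\cdot\nabla_x\chi=\partial_s\sigma$ with a \emph{single} $s$-derivative, not the multi-measure form $\partial_{\xi_1}\sigma_1+\partial_{\xi_2}\sigma_2+\sigma_3$ you allude to.

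The genuine gap is Step~3. What you outline---a single-$\xi$ representation of $|D^hm|^3$, a splitting of $\SS^1$ into a ``transport region'' aligned with $h$ and its complement, then integration along characteristics and H\"older---is a velocity-averaging argument in the Jabin--Perthame style, and that mechanism is known to give only the sub-optimal $W^{1/3-,3/2-}$ regularity, not $B^{1/3}_{3,\infty}$. The optimal estimate requires the Varadhan-type \emph{interaction} quantity in two kinetic variables,
\[
\Delta(x,h,e)=\iint_{\SS^1\times\SS^1}\varphi(\xi,\eta)(\xi\wedge\eta)\,D^h_e\chi(x,\xi)\,D^h_e\chi(x,\eta)\,d\xi\,d\eta,
\]
for a suitable sign kernel $\varphi$. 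One proves pointwise coercivity $\Delta\gtrsim |D^h_e m|^3$, and then differentiates $\int_{\Omega'}\Delta\,dx$ in $h$ and uses the kinetic equation \emph{twice} (once in $\xi$, once in $\eta$) so that all spatial derivatives reassemble into an exact divergence plus terms controlled by $\|\sigma\|$; integrating in $h$ gives $\int|D^h_em|^3\lesssim |h|$. This is what Golse--Perthame already do for Burgers; there the monotonicity of the Maxwellian makes the kernel $\varphi$ trivial, while here that monotonicity is absent and the choice and analysis of $\varphi$ are where the real work lies. (Incidentally, the kinetic variable lives on $\SS^1$, not $\R^2$, so there is no degeneracy at $\xi\to 0$ and no truncation in $|\xi|$ to worry about.)
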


This Theorem improves the previous literature in several aspects:
the kinetic formulation is deduced from the mere knowledge that all entropy productions are finite, instead of the stronger requirement that $m$ be the limit of an Aviles-Giga sequence \eqref{eq:limit}. Whether or not the latter is \emph{strictly} stronger  is a  nontrivial and, to the authors' knowledge, open question (related to the upper $\Gamma$-limit of $AG_\e$ -- what can be checked by estimating the energy of a convolution is that maps $m\in B^{1/2}_{2,\infty}\cap B^{1/4}_{4,\infty}\subsetneq B^{1/3}_{3,\infty}$ are limits of Aviles-Giga sequences, see \cite{poliakovsky17}). Moreover our kinetic formulation (see \eqref{eq:kin} below) takes a simpler form than \eqref{eq:kinJP}.

The fractional differentiability $B^{1/3}_{3,\infty}$ that we deduce from the kinetic formulation entails improved integrability compared to the previous known one \cite{jabinottoperthame02}. As already mentioned, the corresponding result for Burgers' equation is due to Golse and Perthame \cite{golseperthame13}. 
Their proof relies on a kinetic formulation in which the equilibrium function $\chi$ satisfies some monotonicity assumption. 
This monotonicity is not present in our case, which requires substantial modification of their method.

This fractional differentiability is necessary \emph{and sufficient} (hence optimal). Moreover our calculations also show that slightly better summability (e.g. $m\in B^{1/3}_{3,q}$ for some $q<\infty$, see \S~\ref{s:cor}) already triggers the aforementioned enhanced regularity ($m$ locally Lipschitz outside a discrete set). This criticality of $B^{1/3}_{3,\infty}$ is due to the commutator estimates employed in the argument: similarly to the case of Euler equations, ``energy conservation'' for functions with slightly better differentiability properties can be proved  \cite{delellisignat15,constantinetiti94}.

The proof of the Besov regularity from the kinetic formulation (implication (ii)$\Rightarrow$(iii) in the Theorem) employs an {\em interaction estimate} due to Varadhan \cite{tartar08}, that was used in \cite{golseperthame13} 
and in \cite{goldmanjosienotto15}: as in those works we build a quantity $\Delta(x,z)$ which depends on the equilibrium function
$ \chi(x,\xi)$ and which controls the cubic increment $|m(x+z)-m(x)|^3$. The above-mentioned interaction estimate, together with the kinetic 
formulation, provides an upper bound on $\int_\Omega\Delta(x,z)dx$ in terms of $|z|$, hence the Besov regularity. 
To prove (i)$\Rightarrow$(ii), i.e. the validity of a kinetic formulation from the knowledge of having finite entropy production, we employ a Banach-Steinhaus argument 
as  in \cite{delellisottowestdickenberg03,BBMN10}. The other implication (iii)$\Rightarrow$(i) follows from a careful integration by compensation inspired by \cite{delellisignat15}.

After proving the above Theorem, we explore several questions that come up naturally.
As already mentioned, in the model studied by Rivi\`ere and Serfaty in \cite{riviereserfaty01,riviereserfaty03}, the solutions of \eqref{eq:m} that arise can be written as $m=e^{i\varphi}$ with some control on the lifting $\varphi$. Analogues of our entropy productions and kinetic formulation play a crucial role, and the kinetic defect measure (which is linked to the kinetic formulation) provides a \emph{sharp} lower bound for the energy \cite{riviereserfaty03}.
We show that the corresponding property is \emph{not} present in our case.

Another natural question regards the set of entropies necessary to obtain the Besov regularity. Lorent and Peng prove in \cite{lorentpeng17} that the vanishing of only two particular entropy productions is enough to force all the entropy productions to vanish: is the mere finiteness of these two particular entropy productions also enough to ensure the optimal regularity? We are unable to fully answer this question, but adapting some arguments in \cite{golse10} we do obtain some (lower) regularity.

The article is structured as follows: after some preliminary notations in the next Section~\ref{s:not}, in Section~\ref{s:main} we state and prove the main Theorem, and in Section~\ref{s:cor} we gather some results related to the zero energy states 
and to the above mentioned further natural questions.

\begin{ack}
The authors thank Pierre Bochard for many enlightening discussions and his participation to the proof of Proposition~\ref{p:3}.
F.G. is supported by the ERC Starting Grant 676675 {\em FLIRT - Fluid Flows and Irregular Transport}. 
F.G. acknowledges the kind hospitality of Institut de Math\'ematique de Toulouse, where part of this paper was written. 
\end{ack}

\section{Notations and statement of the problem}\label{s:not}

Given $\xi,\eta\in\R^2$ we identify them with complex number in order to define their scalar and vector products:
\begin{equation*}
\bar\xi\eta = \xi\cdot\eta + i\xi\wedge\eta.
\end{equation*}
Equivalently: $\xi\cdot\eta = \xi_1\eta_1 +\xi_2\eta_2$, $\xi\wedge\eta = \xi_1\eta_2-\xi_2\eta_1$. 
For the sake of clarity we will not always identify unit complex numbers with rotations of the plane: in such occasions, 
rotations by an angle $\theta$ will be denoted by $R_\theta\in SO(2)$.

We will measure the smoothness of our unit fields $m$ in the scale of Besov spaces on a domain: 
in order to keep the notation light, we give the definition only for the exponents we need; 
we refer the reader to \cite{triebel06} for an overview of the definitions; see also \cite{bahourichemindanchin}. 
If $f:\Omega\rightarrow\R$ and $z\in\R^n$, we define $D^z f(x) $ to be the increment $f(x+z)-f(x)$ if both $x,x+z\in\Omega$, and zero otherwise. 
\begin{definition}[Besov spaces on domains, {\cite[Theorem 1.118]{triebel06}}]
Let $\Omega$ be a bounded Lipschitz domain and $1\leq q \leq \infty$. 
A function $f:\Omega\rightarrow\R$ belongs to $B^{\frac 13}_{3,q} (\Omega)$ if 
\[
\|f\|_{L^3(\Omega)} + 	
\left( 
\int_0^1 \left( t^{-\frac 13}
\sup_{|z|\leq t} \| D^zf  \|_{L^3(\Omega)}\right)^q
\frac{dt}{t}
\right)^\frac{1}{q}
<\infty.
\]
The expression on the left hand side provides a quasi-norm. Moreover we denote with $B^{\frac 13}_{3,q,\loc} (\Omega)= \bigcap_{U\subset\subset\Omega} B^{\frac 13}_{3,q} (U)$. 
\end{definition}
It will be convenient to denote, for $U\subset \Omega$, 
\[
N_t(f,U) := \sup_{|z| \leq t} t^{-\frac{1}{3}} \| D^{z} f\|_{L^3(U)}:
\]
with this quantity we can also define the distinguished subspace, lying between $B^{\frac 13}_{3,q} $ with $q<\infty$ and 
$B^{\frac 13}_{3,\infty} $:
\[
B^{\frac 13}_{3,c_0} (\Omega) = B^{\frac 13}_{3,\infty} (\Omega)\cap \{ f : \lim_{t\rightarrow 0} N_t(f,\Omega) = 0\}. 
\]

In order to detect and describe the singularities of solutions of equation~\eqref{eq:m}, it is customary to test the 
equation on suitable renormalization of the solution (see \cite{DMKO01,ignatmerlet12,delellisignat15}):
\begin{definition} 
A function  $\Phi  \in C^2(S^1,\R^2)$ is an entropy for the equation \eqref{eq:m} if  
\[
e^{it}\cdot\frac{d}{dt}\left[\Phi(e^{it})\right]=0\qquad\forall t\in\R.
\]
The set of all entropies is denoted by $ENT$.
\end{definition}

This definition is designed so that any smooth unit field $m$ solving \eqref{eq:m} satisfies ${  \nabla}\cdot\Phi(m) =0$ for $\Phi\in ENT$. In contrast, if $m$ has only bounded variation, $\nabla\cdot\Phi(m)$ will be a measure concentrated on the jump set of $m$, called the entropy production associated to $\Phi$.
In other words, $BV$-type jump discontinuities of $m$ are detected by such divergences: already in \cite{jinkohn00}, in the context of the Aviles-Giga functional~\eqref{eq:ag},  
a special family of ``cubic'' entropies 
$\Sigma_{\alpha_1,\alpha_2}$ were introduced, depending on a chosen orthonormal frame 
of coordinates $(\alpha_1,\alpha_2)$:
\begin{equation}\label{eq:jk}
\Sigma_{\alpha_1,\alpha_2}(z) = \frac 43\left((z\cdot\alpha_2)^3\alpha_1 + (z\cdot\alpha_1)^3\alpha_2\right). 
\end{equation}

The maps $\Sigma_{\alpha_1,\alpha_2}$ are easily seen to belong to $ENT$. 
The divergences  ${ \nabla}\cdot\Sigma_{\alpha_1,\alpha_2}(m)={  \nabla}\cdot\Sigma_{\alpha_1,\alpha_2}(\nabla^\perp u)$ of these entropies detect the jump discontinuities of $\nabla u$, according to the relative orientation of the discontinuity set 
$J_{\nabla u}$ with respect to the chosen frame $(\alpha_1,\alpha_2)$. 
An optimization procedure over the frame bundle provides the lower bound
\begin{equation}\label{eq:cubic}
\Gamma-\liminf_\e AG_\e (u) \geq  \frac{1}{3}\int_{J_{\nabla u}}\abs{\nabla^+ u - \nabla^- u}^3 d\mathcal H^1,
\end{equation}
(in the $W^{1,3}$  topology) at functions $u$ such that $\nabla u\in BV(\Omega)$ and $\abs{\nabla u}=1$ almost everywhere. Here 
$J_{\nabla u}$ is the jump set of the gradient and $\nabla^\pm u$ are its traces on $J_{\nabla u}$, see \cite{ambdelman99}. 
The cubic power of the jump appearing in \eqref{eq:cubic} hints at the Besov scale $B^{\frac 13}_{3,q}$ we are considering here. For functions $u$ with $\nabla u\in BV$, the right-hand side of \eqref{eq:cubic} can be conveniently expressed in terms of the entropy productions, since it holds
\begin{align*}
\frac 13 \abs{\nabla^+ u-\nabla^- u}^3\mathcal H^1\res J_{\nabla u} &= \bigvee_{(\alpha_1,\alpha_2)}\norm{{ \nabla}\cdot\Sigma_{\alpha_1,\alpha_2}(u)}.
\end{align*}
This is proved in \cite[Theorem 3.8]{ambdelman99} (see also \cite{ignatmerlet12}). 
Here $\norm{\mu}$ denotes the total variation measure of a complex-valued measure $\mu$, and the symbol $\bigvee$ denotes the least upper bound of a family of measures \cite[Definition~1.68]{AFP00}: 
\[
\bigvee_{\alpha\in A}\mu_\alpha(E) :=\sup\left\{
\sum_{\{\alpha'\}\subset A} \mu_{\alpha'}(E_{\alpha'}) : \{E_{\alpha'}\} \text{ pairwise disjoint, }E=\bigcup_{\alpha'} E_{\alpha'}
\right\}. 
\]

Hence the estimate \eqref{eq:cubic} provides a control of the entropy production associated to the cubic entropies \eqref{eq:jk} by the Aviles-Giga energy. In fact for any entropy $\Phi\in ENT$ it is shown in \cite{DMKO01} that limits $m$ of sequences $m_\e=\nabla^\perp u_\e$ with $AG_\e(u_\e)\leq M$ satisfy
\begin{equation*}
\norm{\nabla\cdot\Phi(m)}(\Omega)\leq C \norm{D^2\Phi}_\infty \liminf_{\e\to 0} AG_\e(u_\e).
\end{equation*}
In particular all the entropy productions are finite measures. This motivates the following
\begin{definition}
We say that a vector field $m$ solving \eqref{eq:m} has {\em locally finite weak  entropy production} in $\Omega$ if for every $\Phi\in ENT$ 
we have 
\begin{equation}\label{eq:wfep}\tag{wFEP}
{  \nabla}\cdot\Phi(m) \in \mathcal M_{\loc} (\Omega).
\end{equation}
If furthermore  
\begin{equation}\label{eq:fep}\tag{sFEP}
\bigvee_{\Phi\in ENT,\, \|D^2\Phi\|_\infty\leq 1} \|{\nabla}\cdot\Phi(m)\|\in\mathcal M_{\loc}(\Omega),
\end{equation}
 we say that $m$ has {\em locally finite strong  entropy production} in $\Omega$.
\end{definition}

\begin{remark}
Limits of sequences $m_\e=\nabla^\perp u_\e$ with $AG_\e(u_\e)\leq M$ satisfy \eqref{eq:fep}.
\end{remark}

\begin{definition}
We say that a vector field $m$ solving \eqref{eq:m} satisfies the kinetic formulation 
if there exists a Radon measure $\sigma\in\mathcal M_{\loc}( \Omega \times \R/2\pi\mathbb Z )$
such that
\begin{equation}\label{eq:kin}\tag{KIN}
e^{is}\cdot	\nabla_x\one_{e^{is}\cdot	m(x)>0} = \partial_s\sigma \qquad\text{ in }\mathcal D '( \Omega \times \R/2\pi\mathbb Z ).
\end{equation}
\end{definition}

The main Theorem of this paper is the following: 
\begin{theorem}\label{t:main}
Let $m$ satisfy \eqref{eq:m}. The following four conditions are equivalent:
\begin{itemize}
\item[(i)] $m$ has  locally  finite weak entropy production, \eqref{eq:wfep};
\item[(ii)] $m$ satisfies the kinetic equation \eqref{eq:kin};
\item[(iii)] $m\in B^{1/3}_{3,\infty,\loc }(\Omega)$;
\item[(iv)] $m$ has locally  finite strong entropy production, \eqref{eq:fep}.
\end{itemize}
\end{theorem}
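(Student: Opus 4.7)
The plan is to close the cycle (iv)$\Rightarrow$(i)$\Rightarrow$(ii)$\Rightarrow$(iii)$\Rightarrow$(iv). The first arrow is immediate: the supremum measure in \eqref{eq:fep} dominates each individual $\|\nabla\cdot\Phi(m)\|$, so \eqref{eq:wfep} follows at once.

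For (i)$\Rightarrow$(ii) I would follow the Banach--Steinhaus scheme of \cite{delellisottowestdickenberg03,BBMN10}. To every smooth $\psi\in C^\infty(\R/2\pi\Z)$ one associates the superposed entropy
\[
\Phi_\psi(m):=\int_0^{2\pi}e^{is}\,\one_{e^{is}\cdot m>0}\,\psi(s)\,ds\in ENT,
\]
and formally pairing \eqref{eq:kin} against $\varphi(x)\psi(s)$ yields the identity $\la\nabla\cdot\Phi_\psi(m),\varphi\ra=\la\partial_s\sigma,\varphi\psi\ra$. Under (i) each $\nabla\cdot\Phi_\psi(m)$ is a finite measure, so for every fixed $\varphi\in C^\infty_c(\Omega)$ the linear map $\psi\mapsto\la\nabla\cdot\Phi_\psi(m),\varphi\ra$ is everywhere defined on $C^\infty(\R/2\pi\Z)$; a Banach--Steinhaus argument upgrades this into a uniform bound $|\la\nabla\cdot\Phi_\psi(m),\varphi\ra|\le C(\varphi)\|\psi\|_{C^k}$. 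The resulting bilinear form extends to a distribution on $\Omega\times\R/2\pi\Z$ whose $s$-antiderivative is the kinetic measure $\sigma$ of \eqref{eq:kin}.

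For (ii)$\Rightarrow$(iii) I would implement the Varadhan interaction estimate used in \cite{golseperthame13,goldmanjosienotto15}. Setting $\chi(x,s):=\one_{e^{is}\cdot m(x)>0}$ and choosing a symmetric weight $K(s,t)$ which selects transversal directions (in practice $K\approx|\sin(s-t)|$), the key object is
\[
\Delta(x,z):=\iint_{[0,2\pi]^2}\bigl(\chi(x+z,s)-\chi(x,s)\bigr)\bigl(\chi(x+z,t)-\chi(x,t)\bigr)\,K(s,t)\,ds\,dt.
\]
A direct geometric computation gives $\Delta(x,z)\simeq|m(x+z)-m(x)|^3$, since each factor is supported on an arc of length $\simeq|m(x+z)-m(x)|$ and $K$ contributes one more such factor. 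Inserting \eqref{eq:kin} at $x$ and its translate at $x+z$, two integrations by parts transfer the $x$-derivatives onto $s,t$-derivatives of the smooth weight $K$, and the transport structure yields the interaction bound $\int_{\Omega'}\Delta(x,z)\,dx\le C_{\Omega'}|z|$ for every $\Omega'\Subset\Omega$. This is exactly the Besov estimate $\|m(\cdot+z)-m(\cdot)\|_{L^3(\Omega')}^3\lesssim|z|$. The main difficulty compared with the Burgers setting of \cite{golseperthame13} is the \emph{lack of monotonicity} of $\chi$ in $s$ (reflecting the absence of a privileged direction), which forces a symmetric choice of $K$ and a careful handling of the $2\pi$-periodicity boundary terms.

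For (iii)$\Rightarrow$(iv) I would adapt the commutator estimate of \cite{delellisignat15}. With $m_\e:=m*\rho_\e$, the combination of $\nabla\cdot m=0$, $|m|=1$ and the entropy condition $\xi\cdot D\Phi(\xi)=0$ on $S^1$ forces the smooth divergence $\nabla\cdot\Phi(m_\e)$ to reduce to a Constantin--E--Titi type commutator
\[
\nabla\cdot\Phi(m_\e)=R_\e[\Phi,m],
\]
bilinear in finite differences of $m$ and in $\nabla\rho_\e$. The Besov hypothesis $m\in B^{1/3}_{3,\infty,\loc}$ supplies $N_\e(m,\Omega'')\lesssim 1$ and, via a Young-type bound, gives
\[
\|R_\e[\Phi,m]\|_{L^1(\Omega')}\le C\,\|D^2\Phi\|_{L^\infty}\,\e^{-1}\,N_\e(m,\Omega'')^{3}\le C\,\|D^2\Phi\|_{L^\infty}
\]
uniformly in $\e$. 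Passing to a weak--$*$ limit identifies $\nabla\cdot\Phi(m)$ as a Radon measure with total variation $\le C\|D^2\Phi\|_{L^\infty}$. The hardest step will be to promote this uniform bound into the finiteness of the supremum measure \eqref{eq:fep}: one selects a countable dense family of entropies with $\|D^2\Phi\|_\infty\le 1$ and uses the uniform estimate, together with the supremum-measure constructions of \cite{ambdelman99,ignatmerlet12}, to verify that the resulting set function is locally finite. The criticality of $B^{1/3}_{3,\infty}$ appearing here reflects the Onsager-type nature of the commutator, in line with the discussion in \S~\ref{s:cor}.
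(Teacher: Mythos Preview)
Your overall scheme---closing the cycle (iv)$\Rightarrow$(i)$\Rightarrow$(ii)$\Rightarrow$(iii)$\Rightarrow$(iv) via Banach--Steinhaus, an interaction estimate, and a De~Lellis--Ignat commutator---is exactly the paper's strategy, and your description of (i)$\Rightarrow$(ii) and of the commutator structure in (iii)$\Rightarrow$(iv) is essentially on target. However, two of the concrete choices you write down would fail.

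In (ii)$\Rightarrow$(iii) your weight $K(s,t)\approx|\sin(s-t)|$ does \emph{not} give the coercivity $\Delta(x,z)\gtrsim|D^z m(x)|^3$. For a jump of angle $2\beta$ the increment $D\chi(\cdot,s)$ is supported on two arcs of length $2\beta$ centered at $\pm\pi/2$; the diagonal contributions to $\iint|\sin(s-t)|D\chi(s)D\chi(t)$ are $\sim\beta^3$, but the cross term (where $s-t\approx\pm\pi$ and $|\sin(s-t)|$ is again $\sim\beta$) enters with the opposite sign and cancels them to leading order, so $\Delta\to 0$. The paper's weight carries an extra factor $\mathrm{sign}(\cos(s-t))$ (equivalently $\varphi(\xi,\eta)=\mathrm{sign}(\xi\cdot\eta)\,\mathrm{sign}(\xi\wedge\eta)$), which flips the sign of the cross term and restores coercivity; this factor is also what makes $\widetilde\varphi$ be $\pi$-periodic, a symmetry used in the lower-bound computation. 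The point is precisely the one you flag---lack of monotonicity of $\chi$---but the fix is in the weight, not only in the boundary terms.

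In (iii)$\Rightarrow$(iv) your bound $\|R_\e\|_{L^1}\le C\|D^2\Phi\|_\infty\,\e^{-1}N_\e(m)^3$ is off by a factor of $\e$. With the paper's normalisation $N_\e(m)=\sup_{|z|\le\e}\e^{-1/3}\|D^z m\|_{L^3}$ (which is what makes $N_\e\lesssim 1$ under $B^{1/3}_{3,\infty}$), the correct estimate is $\|R_\e\|_{L^1}\lesssim\|D^2\Phi\|_\infty\,N_\e(m)^3$ with \emph{no} negative power of $\e$: one pairs $\|\nabla m_\e\|_{L^3}\lesssim\e^{-2/3}N_\e$ against $\|1-|m_\e|^2\|_{L^{3/2}}\lesssim\e^{2/3}N_\e^2$, and the $\e$-powers cancel. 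Your stated bound blows up as $\e\to 0$ and would not close. Once corrected, the passage to the least-upper-bound measure \eqref{eq:fep} goes exactly as you indicate.
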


\begin{remark}
It is interesting to recall the following boundary behaviour of solutions of conservation laws with finite entropy production \cite[Theorem 1.1]{vasseur01} (see also \cite[Theorem 2.5]{CDDG16}): 
the field $m$ admit a strong $L^1$ trace on $\partial\Omega$, in the sense that there exists a function $v\in L^\infty(\partial\Omega,S^1)$ such that
\[
\esslim_{s\rightarrow 0}\int_{\partial\Omega}|u(\psi(s,x))-v(x)|d\mathcal H^1(x) =0, 
\]
where $\psi$ is a suitable parametrization of a neighborhood of $\partial\Omega$. 
\end{remark}

\section{Proof of the Main Theorem}\label{s:main}
The proof of Theorem \ref{t:main} is divided into three propositions. The implication (iv)$\Rightarrow$(i) is trivial. 

\subsection{Finite entropy implies kinetic formulation}

\begin{proposition}\label{p:1}
If $m$ has weak finite entropy production \eqref{eq:wfep}, then it satisfies the kinetic formulation \eqref{eq:kin}.
\end{proposition}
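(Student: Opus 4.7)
The plan is to build the kinetic defect $\sigma$ by pairing the expected identity $e^{is}\cdot\nabla_x\chi=\partial_s\sigma$ against separable test functions $\phi(x)\beta(s)$; each such pairing reduces to an entropy production tested against $\phi$ with a $\beta$-dependent entropy, and a Banach--Steinhaus argument will supply the uniform bounds needed to realize $\sigma$ as a measure.

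First I would associate to each scalar profile $B\in C^1(S^1)$ the vector field
\[
\Psi_B(z)\;:=\;\int_{S^1} e^{is}\,B(s)\,\one_{e^{is}\cdot z>0}\,ds,\qquad z\in S^1.
\]
A direct calculation with $z=e^{it}$ gives $\frac{d}{dt}\Psi_B(e^{it})=ie^{it}[B(t+\pi/2)+B(t-\pi/2)]$, which is orthogonal to $e^{it}$, so $\Psi_B\in C^2(S^1,\R^2)\cap ENT$. For a constant $B\equiv c$ one computes $\Psi_c(z)=2cz$, whence $\nabla\cdot\Psi_c(m)=0$ and the operator $B\mapsto\nabla\cdot\Psi_B(m)$ descends to $C^1(S^1)/\R$.

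Fix now $U\Subset\Omega$. The linear map $T:C^1(S^1)\to\mathcal M(U)$, $T(B):=\nabla\cdot\Psi_B(m)|_U$, is well defined by \eqref{eq:wfep}. If $B_n\to B$ in $C^1(S^1)$, then $\Psi_{B_n}\to\Psi_B$ in $C^2$ and thus $T(B_n)\to T(B)$ in $\mathcal D'(U)$, so the graph of $T$ is closed. The closed graph theorem, combined with the $\R$-invariance and Poincar\'e's inequality on $S^1$, yields
\[
\|T(B)\|_{\mathcal M(U)}\le C(U)\,\|B'\|_\infty.
\]
Writing $\beta=B'$, this is equivalent to saying that the bilinear form $L(\phi,\beta):=-\langle T(B),\phi\rangle$ on $C_c(U)\times C^0(S^1)$ is well defined independently of the chosen primitive, vanishes for constant $\beta$, and satisfies $|L(\phi,\beta)|\le C(U)\|\phi\|_\infty\|\beta\|_\infty$.

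To conclude, thanks to the compatibility $\int_{S^1}e^{is}\chi(x,s)\,ds=2m(x)$ together with $\nabla\cdot m=0$, one can define $\sigma$ as the distributional primitive in $s$ of $e^{is}\cdot\nabla_x\chi$, normalized by having vanishing mean in $s$. Evaluated on tensor test functions $\phi(x)B'(s)$ the distribution $\sigma$ acts as $L(\phi,B')$ and therefore satisfies the sup-norm bilinear bound from the previous step. Using density of tensor products in $C_c(U\times S^1)$ and a Riesz representation argument in the spirit of \cite{delellisottowestdickenberg03,BBMN10}, one upgrades $\sigma$ to a Radon measure on $U\times S^1$; exhausting $\Omega$ by an increasing sequence of such $U$ yields $\sigma\in\mathcal M_{\loc}(\Omega\times S^1)$ satisfying \eqref{eq:kin}. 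The delicate point, which I expect to be the main obstacle, is precisely this last extension: the bilinear bound naturally lives with respect to the projective tensor norm, while measures pair with the injective one on $C_c(U\times S^1)$, so the passage is not automatic. A careful approximation --- e.g.\ mollification in $s$ combined with the distributional identification of $\sigma$ --- will be required to ensure that $\sigma$ remains of order zero against arbitrary continuous test functions.
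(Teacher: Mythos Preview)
Your approach is essentially the paper's: your entropies $\Psi_B$ coincide with the paper's $\Phi_f$ under the identification $B=\psi_f$ (see Lemma~\ref{l:linknuPhif}, where exactly your formula for $\Psi_B$ is shown to compute $\Phi_f(m)$), and your closed-graph-plus-Poincar\'e argument yields the same total-variation bound as the paper's Banach--Steinhaus (Lemma~\ref{l:sigma}), after which the low Fourier modes are handled identically. The projective/injective tensor issue you correctly flag at the end is also passed over quickly in the paper (``tensor products are dense\ldots by Riesz''), so your proposal is on equal footing with the paper here; your suggested mollification-in-$s$ alone does not obviously close the gap, and both arguments ultimately lean on the cited references for this step.
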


We will need to construct a suitable family of entropies $\Phi_f$ parametrized (linearly) by continuous functions on $S^1$: 
\begin{equation*}
C^0(\R/2\pi\Z,\R)\ni f\mapsto \Phi_f \in ENT.
\end{equation*}
The construction is done in several steps. First define $\tilde f\in C^0(\R/2\pi\Z,\R)$ by removing the null and the first Fourier modes:
\begin{equation*}
\tilde f(t)=f(t)-\left(\frac{1}{2\pi}\int_0^{2\pi} f(s)\,ds \right) - \left(\frac{1}{\pi}\int_0^{2\pi} f(s)\cos s\, ds\right)  \cos t -\left(\frac{1}{\pi}\int_0^{2\pi} f(s)\sin s\, ds\right)  \sin t.
\end{equation*}
Then define $\psi_f\in C^1(\R/2\pi\Z,\R)$ by
\begin{equation*}
\psi_f(t)=\int_0^t \tilde f(s)\, ds.
\end{equation*}
Note that $\psi_f$ is $2\pi$-periodic since $\int_0^{2\pi}\tilde f=0$. Moreover it holds
\begin{equation*}
\int_0^{2\pi} \psi_f(s) e^{is} \,ds =0,
\end{equation*}
since $\int_0^{2\pi} \tilde f(s)e^{is}\, ds=0$. This allows us to define $\varphi_f\in C^2(\R/2\pi\Z,\R^2)$ by
\begin{equation*}
\varphi_f(t)=\int_0^t \psi_f(s) i e^{is}\, ds.
\end{equation*}
Finally define $\Phi_f\in C^2(\mathbb S^1,\R^2)$ by
\begin{equation*}
\Phi_f(e^{it})=-i\varphi_f(t-\pi/2)+i\varphi_f(t+\pi/2).
\end{equation*}
Then it holds
\begin{align*}
e^{it}\cdot\frac{d}{dt}\left[\Phi_f(e^{it})\right]&=e^{it}\cdot\left(-i\psi_f(t-\pi/2)ie^{i(t-\pi/2)}+i\psi_f(t+\pi/2)ie^{i(t+\pi/2)} \right)\\
&=-\left(\psi_f(t-\pi/2)+\psi_f(t+\pi/2)\right)\, e^{it}\cdot (ie^{it}) =0,
\end{align*}
so that  $\Phi_f\in ENT$. Note that the map $f\mapsto\Phi_f$ is linear, and that $\norm{\Phi_f}_{C^2}\leq C\norm{f}_{C^0}$ for some constant $C>0$.

\begin{remark}\label{r:frames}
Note that $\Phi_{\cos(2t)} = -\frac 12 \Sigma_{e_1,e_2}$ and that $\Phi_{\sin(2t)} = - \frac 12 \Sigma_{\e_1,\e_2}$, where
$(e_1,e_2)$ is the standard basis and $({\e_1,\e_2})$ is its rotation by $\pi/4$. In particular, the classical entropies for the Aviles-Giga functional discovered by Jin and Kohn 
are parametrized by the first nontrivial modes of $f$ (those with wavenumber $2$). 
\end{remark}

The reason for defining the family of entropies $\{\Phi_f\}$ as above lies in its connection to the left-hand side of the kinetic formulation:
\begin{lemma}\label{l:linknuPhif}
Let $\nu:=e^{it}\cdot\nabla_x\left(\one_{e^{it}\cdot m(x)>0}\right)\in \mathcal D'(\Omega\times\R/2\pi\Z)$.
For any $f\in C^\infty(\R/2\pi\Z,\R)$ and $\zeta\in C_c^\infty(\Omega)$ it holds
\begin{equation*}
\langle \nu,\zeta\otimes  \psi_f \rangle = -\langle\nabla\cdot\Phi_f(m),\zeta\rangle.
\end{equation*}
\end{lemma}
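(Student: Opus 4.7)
The proof should be a straightforward computation once the key observation is identified: the inner integral in $t$ that appears when unpacking $\langle\nu,\zeta\otimes\psi_f\rangle$ agrees pointwise (in $x$) with $-\Phi_f(m(x))$. Let me outline the steps.

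First I would unpack the distributional pairing according to the definition of $\nu$. Since $\nu$ involves only the $x$-gradient, testing against $\zeta(x)\psi_f(t)$ and integrating by parts in $x$ gives
\begin{equation*}
\langle\nu,\zeta\otimes\psi_f\rangle = -\int_{\Omega}\int_0^{2\pi}\one_{e^{it}\cdot m(x)>0}\,\psi_f(t)\, e^{it}\cdot\nabla\zeta(x)\,dt\,dx.
\end{equation*}
The integrand is bounded and compactly supported in $x$, so Fubini applies and we may rewrite the expression as $-\int_{\Omega}\nabla\zeta(x)\cdot\Psi_f(m(x))\,dx$, where
\begin{equation*}
\Psi_f(\omega):=\int_0^{2\pi}\psi_f(t)\,e^{it}\,\one_{e^{it}\cdot\omega>0}\,dt\qquad(\omega\in\mathbb S^1).
\end{equation*}

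The core of the proof is then the pointwise identity $\Psi_f=-\Phi_f$ on $\mathbb S^1$. I would write $\omega=e^{i\theta}$ for any choice of angle $\theta\in\R$; this is a purely pointwise identification and requires no global lifting of $m$. The condition $e^{it}\cdot e^{i\theta}>0$ is equivalent to $\cos(t-\theta)>0$, i.e.\ to $t\in(\theta-\pi/2,\theta+\pi/2)$ modulo $2\pi$. By the very definition of $\varphi_f$ one has $\varphi_f'(t)=i\psi_f(t)e^{it}$, hence $\psi_f(t)e^{it}=-i\varphi_f'(t)$, and the fundamental theorem of calculus gives
\begin{equation*}
\Psi_f(e^{i\theta})=-i\bigl[\varphi_f(\theta+\pi/2)-\varphi_f(\theta-\pi/2)\bigr]=-\Phi_f(e^{i\theta}),
\end{equation*}
the last equality being exactly the definition of $\Phi_f$.

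Substituting $\Psi_f(m(x))=-\Phi_f(m(x))$ back into the expression from the first step gives
\begin{equation*}
\langle\nu,\zeta\otimes\psi_f\rangle=\int_{\Omega}\nabla\zeta(x)\cdot\Phi_f(m(x))\,dx=-\langle\nabla\cdot\Phi_f(m),\zeta\rangle,
\end{equation*}
where the second identity is simply the definition of the distributional divergence (note that $\Phi_f(m)\in L^\infty(\Omega;\R^2)$ since $\Phi_f\in C^2(\mathbb S^1,\R^2)$). There is essentially no obstacle here beyond careful bookkeeping; the only thing to watch is the sign that comes from identifying the arc where $\cos(t-\theta)>0$ with the interval centred at $\theta$, and tracking the factor of $-i$ when converting $\psi_f e^{it}$ into $\varphi_f'$.
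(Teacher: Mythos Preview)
Your proof is correct and follows exactly the same route as the paper: unpack the distributional pairing, reduce to a pointwise identity for the inner $t$-integral via the fundamental theorem of calculus (using $\varphi_f'(t)=i\psi_f(t)e^{it}$), and then recognize the result as the distributional divergence. Your sign bookkeeping is in fact slightly more careful than the paper's own write-up (which contains two cancelling sign slips in the intermediate steps), but the argument is identical.
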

\begin{proof}
We have
\begin{align*}
\langle \nu,\psi_f(t)\zeta(x)\rangle =-\int_\Omega\nabla\zeta(x)\cdot\int_{\R/2\pi\Z}\psi_f(t)\one_{e^{it}\cdot m>0}e^{it}\, dt\, dx,
\end{align*}
and for all $x\in\Omega$, writing $m(x)=e^{i\alpha}$ we compute
\begin{align*}
\int_{\R/2\pi\Z}\psi_f(t)\one_{e^{it}\cdot m>0}e^{it}\, dt
&=\int_{\alpha-\pi/2}^{\alpha+\pi/2}\psi_f(t)e^{it}\,dt\\
&=\int_{\alpha-\pi/2}^{\alpha+\pi/2}\frac{1}{i}\varphi_f'(t)\,dt\\
&=-i\varphi_f(\alpha+\pi/2)+i\varphi_f(\alpha-\pi/2)\\
&=\Phi_f(m),
\end{align*}
hence $\langle \nu,\zeta\otimes  \psi_f \rangle =- \int_\Omega\Phi_f(m)\cdot\nabla\zeta\, dx$.
\end{proof}

The next lemma provides the measure $\sigma$ appearing in the right-hand side of the kinetic formulation: 
as in \cite[Theorem 3.1.6]{Perthame}, the entropy production of the solution $u$ of a conservation law under a certain entropy $S$ can be written as an integral 
of $S''$ against the so-called entropy measure. In our case, observe that $\Phi_f$ is obtained by integrating  $f$ twice. 
\begin{lemma}\label{l:sigma}
If $m$ has  locally finite weak entropy production in $\Omega$, then there exists $\sigma \in \mathcal{M}_{\loc}(\Omega\times \R/2\pi\mathbb Z)$ satisfying
\begin{equation}\label{eq:sigma}
\langle\nabla\cdot\Phi_f(m), \zeta\rangle = \iint_{\Omega\times \R/2\pi\mathbb Z} f(t)\zeta(x)  d\sigma(x,t),
\end{equation}
 for every $\zeta\in C^\infty_c(\Omega) $ 
and every $f \in C^0(\R/2\pi\mathbb Z,\R)$. 
\end{lemma}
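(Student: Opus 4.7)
My plan is a Banach--Steinhaus argument followed by a Riesz--Markov representation, in the spirit alluded to in the introduction. The idea is to first establish a bilinear bound
\[
|\langle \nabla\cdot\Phi_f(m),\zeta\rangle|\leq C_K\,\|f\|_{C^0}\,\|\zeta\|_\infty
\]
for every $\zeta\in C^\infty_c(\Omega)$ supported in a fixed compact $K\subset\subset\Omega$ and every $f\in C^0(\R/2\pi\Z)$, and then use it to assemble a joint Radon measure $\sigma$ on the product.

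\textbf{Step 1: Banach--Steinhaus.} For $\zeta\in C^\infty_c(\Omega)$ supported in $K$, I would introduce the linear functional
\[
T_\zeta(f):=\langle \nabla\cdot\Phi_f(m),\zeta\rangle=-\int_\Omega \Phi_f(m)\cdot\nabla\zeta\,dx.
\]
Since the map $f\mapsto\Phi_f$ constructed before the lemma is linear from $C^0$ to $C^2$ with $\|\Phi_f\|_{C^2}\leq C\|f\|_{C^0}$, each $T_\zeta$ belongs to $(C^0(\R/2\pi\Z))^*$ with norm $\leq C\|\nabla\zeta\|_{L^1}$. For each fixed $f$, the hypothesis \eqref{eq:wfep} guarantees that $\nabla\cdot\Phi_f(m)$ is a Radon measure, whence $|T_\zeta(f)|\leq \|\nabla\cdot\Phi_f(m)\|(K)\|\zeta\|_\infty$ is finite; thus the family $\{T_\zeta:\spt\zeta\subset K,\,\|\zeta\|_\infty\leq 1\}$ is pointwise bounded. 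The uniform boundedness principle then produces a constant $C_K$ with $\sup_\zeta\|T_\zeta\|_{(C^0)^*}\leq C_K$, giving the bilinear estimate above.

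\textbf{Step 2: Extraction of $\sigma$.} For each $\zeta\in C_c(\Omega)$ (extending the definition from $C^\infty_c$ by continuity in the $\zeta$ variable, which is licit thanks to the bound), the Riesz representation theorem furnishes a complex Radon measure $\mu_\zeta\in \mathcal{M}(\R/2\pi\Z)$ with $\int f\,d\mu_\zeta=T_\zeta(f)$ and $\|\mu_\zeta\|\leq C_K\|\zeta\|_\infty$ whenever $\spt\zeta\subset K$. The assignment $\zeta\mapsto\mu_\zeta$ is linear, and the associated bilinear form $B(\zeta,f):=\int f\,d\mu_\zeta$ is bounded, on pure tensors, by the sup-norm on the product space $\Omega\times\R/2\pi\Z$. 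An appeal to a Schwartz-type kernel theorem (or equivalently to the density of $C_c(\Omega)\otimes C(\R/2\pi\Z)$ in $C_c(\Omega\times\R/2\pi\Z)$ combined with Riesz--Markov on the product) yields a Radon measure $\sigma\in \mathcal{M}_{\loc}(\Omega\times\R/2\pi\Z)$ satisfying \eqref{eq:sigma} by construction on tensor products, hence on all test functions by density.

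\textbf{Main obstacle.} The heart of the proof is Step 1. The subtlety is that the natural continuity of $f\mapsto \nabla\cdot\Phi_f(m)$ is through the $C^2$ norm of $\Phi_f$, whereas the bound we need is in terms of the $C^0$ norm of $f$ alone; this gap is bridged by Banach--Steinhaus, with the pointwise bound supplied precisely by the hypothesis \eqref{eq:wfep}. Once the uniform bilinear estimate is secured, the passage from the compatible family $\{\mu_\zeta\}$ to a single product Radon measure $\sigma$ is a largely technical but standard functional-analytic construction.
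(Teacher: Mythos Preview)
Your proposal is correct and follows essentially the same route as the paper: define $T_\zeta(f)=\langle\nabla\cdot\Phi_f(m),\zeta\rangle$, use the continuity of $f\mapsto\Phi_f$ to see each $T_\zeta$ is bounded on $C^0$, invoke \eqref{eq:wfep} for pointwise boundedness over $\{\zeta:\spt\zeta\subset K,\ \|\zeta\|_\infty\le 1\}$, apply Banach--Steinhaus, and then pass to a measure on the product via density of tensor products and Riesz. The only cosmetic difference is that the paper applies Riesz--Markov directly on $\Omega\times\R/2\pi\Z$ in one step, whereas you interpose the intermediate measures $\mu_\zeta$ before gluing; this is harmless and leads to the same conclusion.
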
 
\begin{proof}
We consider, for any fixed $\zeta\in C_c^\infty(\Omega)$, the linear functional $T_\zeta\colon C^0(\R/2\pi\Z,\R)\to\R$ given by
\begin{equation*}
T_\zeta(f)=\langle\nabla\cdot \Phi_f(m),\zeta\rangle.
\end{equation*}
Each functional $T_\zeta$ is continuous, since
\begin{align*}
\abs{T_\zeta(f)}&\leq\norm{\Phi_f}_\infty\norm{\nabla\zeta}_\infty \leq C\norm{\nabla\zeta}_\infty \norm{f}_\infty.
\end{align*}
On the other hand, for any 
$U\subset\subset\Omega$ and 
 $f\in C^0(\R/2\pi\Z,\R)$, by~\eqref{eq:wfep} it holds
\begin{equation*}
\abs{T_\zeta(f)}\leq \norm{{\nabla}\cdot\Phi_f(m)}_{\mathcal M(U)}\qquad\forall \zeta\in C_c^\infty(U),\;\norm{\zeta}_\infty\leq 1.
\end{equation*}
Applying Banach-Steinhaus' theorem we deduce  the existence of $C(U)>0$ such that
\begin{equation*}
\abs{\langle\nabla\cdot \Phi_f(m),\zeta\rangle}\leq C(U)\norm{f}_\infty\norm{\zeta}_\infty,
\end{equation*}
for all $f\in C^0(\R/2\pi\Z,\R)$ and $\zeta\in C_c^\infty(U)$. 
Since  tensor products are dense in 
$C^0_c(\Omega\times \R/2\pi\Z)$, 
by Riesz' representation theorem this implies the existence of $\sigma\in \mathcal M_{\loc}(\Omega\times\R/2\pi\Z)$ satisfying \eqref{eq:sigma}.
\end{proof}

By Lemma~\ref{l:linknuPhif} and \ref{l:sigma} above, and since by definition $f=\psi_f'$, we have
\begin{equation*}
\langle \nu-\partial_t\sigma,\psi_f(t)\zeta(x)\rangle =0\qquad\forall f\in C^\infty(\R/2\pi\Z,\R),\,\zeta\in C_c^\infty(\Omega).
\end{equation*}
However $\psi_f$ cannot be any arbitrary function $\psi\in C^\infty(\R/2\pi\Z,\R)$. In fact it holds
\begin{equation*}
\left\lbrace \psi_f\colon f\in C^\infty(\R/2\pi\Z)\right\rbrace =\left\lbrace \psi\in C^\infty(\R/2\pi\Z)\colon \psi(0)=0\text{ and }\int_0^{2\pi}\psi_f(s)e^{is}\,ds =0\right\rbrace.
\end{equation*}
In other words, we have thus far determined $\nu$ up to the Fourier modes $\lbrace 1,\cos t,\sin t\rbrace$ in the $t$-variable. The next lemma takes care of those modes.

\begin{lemma}
For all $\zeta\in C_c^\infty(\Omega)$ it holds
\begin{equation*}
\langle \nu,\zeta(x)\rangle = \langle \nu, \zeta(x)\cos t\rangle = \langle \nu, \zeta(x)\sin t\rangle =0.
\end{equation*}
\end{lemma}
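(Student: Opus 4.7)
The plan is a direct computation analogous to the one carried out in the proof of Lemma~\ref{l:linknuPhif}. For each of the three test functions $g(t)\in\{1,\cos t,\sin t\}$, I unpack the pairing as
\begin{equation*}
\langle \nu,\zeta(x) g(t)\rangle = -\int_\Omega \nabla\zeta(x)\cdot\left(\int_{\R/2\pi\Z} g(t)\one_{e^{it}\cdot m(x)>0}\,e^{it}\,dt\right)dx,
\end{equation*}
and evaluate the inner integral over the arc $(\alpha-\pi/2,\alpha+\pi/2)$, where $m(x)=e^{i\alpha}$.

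For $g\equiv 1$ the inner integral computes directly to $2 m(x)$, exactly as in the proof of Lemma~\ref{l:linknuPhif} (the antiderivative $-i e^{it}$ evaluated at the endpoints yields $2e^{i\alpha}$). Consequently $\langle\nu,\zeta\rangle = -2\int_\Omega \nabla\zeta\cdot m\, dx = 2\langle\nabla\cdot m,\zeta\rangle = 0$ by the divergence-free constraint in \eqref{eq:m}.

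For $g(t)=\cos t$ and $g(t)=\sin t$, the key observation is that the product $g(t) e^{it}$ decomposes, via product-to-sum identities, as a constant plus a pure second harmonic $e^{\pm 2it}$. Integrating over any interval of length $\pi$ kills the second-harmonic contribution, so the inner integral reduces to a constant vector independent of $\alpha$ (hence of $x$): namely $(\pi/2,0)$ for $g=\cos t$ and $(0,\pi/2)$ for $g=\sin t$. The outer integral then becomes $\int_\Omega\nabla\zeta\cdot c\,dx$ for a constant $c\in\R^2$, which vanishes because $\zeta\in C_c^\infty(\Omega)$.

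There is no real obstacle here; the only thing to verify is the cancellation of the second-harmonic piece, which is immediate from the fact that $\sin$ and $\cos$ are $\pi$-antiperiodic. These three identities, combined with the conclusion obtained just before the statement (which determined $\nu-\partial_t\sigma$ modulo the Fourier modes $\{1,\cos t,\sin t\}$ in the $t$-variable), will finish the proof that $\nu = \partial_t\sigma$ and hence that the kinetic formulation \eqref{eq:kin} holds.
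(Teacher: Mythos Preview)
Your proof is correct and follows essentially the same direct computation as the paper's own proof. The only difference is cosmetic: the paper simply records the value of the inner integral for $g=\cos t$ (and says ``similarly'' for $\sin t$), whereas you explain the mechanism via the decomposition $g(t)e^{it}=\text{const}+\tfrac12 e^{2it}$ and the vanishing of the second harmonic over an interval of length $\pi$.
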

\begin{proof}
We compute
\begin{align*}
\langle\nu,\zeta(x)\rangle&=-\int_\Omega\nabla\zeta(x)\cdot\int_{\R/2\pi\Z}\one_{e^{it}\cdot m>0}e^{it} \,dt\,dx\\
&=-\int_\Omega\nabla\zeta(x)\cdot (2 m(x)) \, dx \\
&=2\langle\nabla\cdot m,\zeta\rangle =0,\\
\langle\nu,\zeta(x)\cos t\rangle&=-\int_\Omega\nabla\zeta(x)\cdot\int_{\R/2\pi\Z}\cos t \one_{e^{it}\cdot m>0}e^{it} \,dt\,dx \\
&= \int_\Omega\nabla\zeta(x)
\cdot\left(\begin{array}{c}\frac{\pi}{2}\\ 0\end{array}\right)\,dx =0,
\end{align*}

and similarly $\langle \nu,\zeta(x)\sin t \rangle =0$.
\end{proof}

\begin{remark}
A similar computation shows that
\begin{equation*}
\langle \nu,\zeta(x)\cos((2k+1)t\rangle = \langle \nu,\zeta(x)\sin((2k+1)t\rangle =0\qquad\forall k\in\mathbb N.
\end{equation*}
Therefore the measure $\sigma$ does not have odd frequency Fourier modes. It can also be checked directly that for $f(t)=\cos((2k+1)t)$ and $f(t)=\sin((2k+1)t)$ it holds $\Phi_f\equiv 0$, which implies the same conclusion.
\end{remark}

\begin{proof}[Proof of Proposition~\ref{p:1}]
For $f(t)=\cos t$ or $f(t)=\sin t$ we have $\tilde f=0$ and therefore $\Phi_f=0$. By Lemma~\ref{l:sigma} this implies
\begin{equation*}
\langle \partial_t\sigma,\psi(t)\zeta(x)\rangle=0\quad\text{ for }\psi(t)=1\text{ or }\cos t\text{ or }\sin t.
\end{equation*}
We deduce that
\begin{equation*}
\langle \nu-\partial_t\sigma,\psi(t)\zeta(x)\rangle =0,
\end{equation*}
for any $\zeta\in C_c^\infty(\Omega)$ and $\psi\in C^\infty(\R/2\pi\Z)$, which proves \eqref{eq:kin}.
\end{proof}

\subsection{Kinetic formulation implies Besov regularity}

\begin{proposition}\label{p:2}
If $m$ satisfies the kinetic equation \eqref{eq:kin}, then it belongs to $B^{1/3}_{3,\infty;\loc}(\Omega)$.
\end{proposition}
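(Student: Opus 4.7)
The plan is to follow the Varadhan--Golse--Perthame strategy adapted to the $S^1$-valued setting. I would work throughout with the equilibrium function $\chi(x,s):=\one_{e^{is}\cdot m(x)>0}$ which, by the kinetic formulation \eqref{eq:kin}, satisfies $e^{is}\cdot\nabla_x\chi=\partial_s\sigma$ in $\mathcal D'(\Omega\times\R/2\pi\Z)$. The goal is to build a nonnegative ``interaction quantity'' $\Delta(x,z)$ depending only on $\chi$ which (a) controls $|m(x+z)-m(x)|^3$ pointwise and (b) satisfies $\int_U\Delta(x,z)\,dx\leq C(U)\,|z|$ for every $U\subset\subset\Omega$ and every sufficiently small $z$. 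Once both are in place, combining them yields $\int_U|m(x+z)-m(x)|^3\,dx\leq C|z|$, i.e. $m\in B^{1/3}_{3,\infty,\loc}(\Omega)$.

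The natural candidate, inspired by \cite{golseperthame13,goldmanjosienotto15}, is
\[
\Delta(x,z):=\iint_{(\R/2\pi\Z)^2}\bigl[\chi(x+z,s)-\chi(x,s)\bigr]\bigl[\chi(x+z,s')-\chi(x,s')\bigr]\,K(s,s')\,ds\,ds',
\]
for a nonnegative symmetric kernel $K$, the natural choice being $K(s,s')=|\sin(s-s')|$ or a positive combination thereof. For the pointwise comparison (a), parametrizing $m(x)=e^{i\alpha}$ and $m(x+z)=e^{i(\alpha+\beta)}$, the integrand is nonzero exactly on the symmetric difference of two arcs of length $\pi$ whose centers differ by $\beta$, which has measure $\sim|\beta|$ in each variable; on that set the kernel is itself of size $\sim|\beta|$. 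A direct computation should then give $c\,|m(x+z)-m(x)|^3\leq\Delta(x,z)$ since $|m(x+z)-m(x)|=2|\sin(\beta/2)|\sim|\beta|$.

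The heart of the proof is the \emph{interaction estimate} in~(b). The Varadhan idea is to choose, or decompose, $K$ in such a way that the $x$-increment $\chi(x+z,\cdot)-\chi(x,\cdot)$ can be converted via the kinetic equation into an $s$-derivative hitting the measure $\sigma$, with the factor $|z|$ coming from writing $\chi(x+z,s)-\chi(x,s)=\int_0^1 z\cdot\nabla_x\chi(x+tz,s)\,dt$ and pairing $z\cdot\nabla_x$ with suitable multiples of $e^{is}$ and $e^{is'}$. Concretely, I would mollify $\chi$ at scale $\varepsilon$ in $x$, perform the computation on the smoothed equation $e^{is}\cdot\nabla_x\chi_\varepsilon=\partial_s\sigma_\varepsilon$, integrate by parts in $s$ and $s'$ to transfer derivatives onto $K$, and close the estimate by optimising in $\varepsilon\sim|z|$ while controlling commutator errors by the local mass of $\sigma$.

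The main obstacle will be precisely this interaction estimate: the Golse--Perthame argument for Burgers' equation exploits the monotonicity in $v$ of the scalar Maxwellian $\one_{0<v<u(x)}-\one_{u(x)<v<0}$, whereas our $\chi$ is the indicator of a half-circle on $S^1$ with no such monotone structure. The decomposition and integration-by-parts scheme must therefore be genuinely modified, possibly by symmetrising in $(s,s')$ to exploit the $SO(2)$-invariance of the problem and by isolating the low Fourier modes $1,\cos s,\sin s$ (which already played a special role in Proposition~\ref{p:1}) before performing the pairing. Assuming this interaction estimate can be carried out, the combination of (a) and (b) yields the desired Besov regularity $m\in B^{1/3}_{3,\infty,\loc}(\Omega)$.
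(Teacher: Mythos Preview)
Your overall strategy---build an interaction quantity $\Delta$ quadratic in the increments of $\chi$, bound it below by $|D^z m|^3$, and bound its integral above by $|z|$ via the kinetic equation---is exactly the paper's approach. However, the concrete choice of kernel you propose is genuinely wrong, and the lower bound (a) fails for it.

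The obstruction is structural. Since $\chi(x,s+\pi)=1-\chi(x,s)$, the increment satisfies $D^z\chi(\cdot,s+\pi)=-D^z\chi(\cdot,s)$. If $K$ depends only on $s-s'$ (as any $SO(2)$-invariant kernel must) and is $\pi$-periodic, then the change of variable $s\mapsto s+\pi$ shows that $\Delta\equiv 0$. Your proposed kernel $K(s,s')=|\sin(s-s')|$ is precisely of this type, so it yields $\Delta\equiv 0$ identically---not $\gtrsim|\beta|^3$. Concretely, the two arcs on which $D^z\chi\neq 0$ are antipodal, and with a nonnegative $\pi$-periodic kernel the same-arc and cross-arc contributions cancel exactly. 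Your heuristic (``the kernel is of size $\sim|\beta|$ on that set'') ignores that the product $D^z\chi(s)\,D^z\chi(s')$ takes both signs. More generally, a nonnegative kernel cannot be $\pi$-\emph{anti}-periodic (except the zero kernel), so no nonnegative $SO(2)$-invariant $K$ will do the job.

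The paper's kernel is $K(\xi,\eta)=\varphi(\xi,\eta)\,(\xi\wedge\eta)$ with $\varphi(\xi,\eta)=\operatorname{sign}(\xi\cdot\eta)\operatorname{sign}(\xi\wedge\eta)$; in angular form this is $\widetilde K(\omega)=\operatorname{sign}(\cos\omega)\,|\sin\omega|$, which is $\pi$-anti-periodic and \emph{not} nonnegative. The extra factor $\operatorname{sign}(\xi\cdot\eta)$ flips the sign of the cross-arc contribution and turns the cancellation into a genuine lower bound (Lemma~\ref{l:lemma2.1}). The factored form $\xi\wedge\eta=\xi_1\eta_2-\xi_2\eta_1$ is also what makes the interaction estimate work: the paper differentiates $\Delta$ in $h$, uses the kinetic equation $\xi_1\partial_1\chi+\xi_2\partial_2\chi=\partial_s\sigma$ to trade $\xi_1\partial_1\chi^h$ for $\partial_s\sigma^h-\xi_2\partial_2\chi^h$, and after skew-symmetrisation obtains a total divergence plus a term where $\partial_s$ hits the (bounded-variation) factor $\widetilde\varphi$. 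There is no optimisation in a mollification scale $\varepsilon\sim|z|$; the regularisation is only a technical device and one sends $\varepsilon\to 0$ at the end. Your sketch of step~(b) does not capture this mechanism and, as written, would not close.
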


The proof of Theorem~\ref{t:main} is inspired from the kinetic averaging lemma in 
\cite{golseperthame13} for 1D scalar conservation laws, and the way it is revisited in \cite{goldmanjosienotto15}. 
Following \cite{goldmanjosienotto15,crippaottowestdickenberg08} we make use of the 
following quantity to control spatial increments of $m$ at a fixed scale $h$ in the direction $e$. 
Let $m:\Omega\rightarrow S^1$ be measurable: given $h>0$, $|e|=1$ and $x\in\Omega$ we set
\begin{equation*}
\Delta (x,h,e) = \iint_{S^1\times S^1}\varphi(\xi,\eta)(\xi\wedge\eta) D_e^h\chi(x,\xi) D_e^h\chi(x,\eta) d\xi d\eta, 
\end{equation*}
where $\chi(x,\xi) =  \one_{\xi\cdot m(x)>0}$, $D_e^h\chi(x,\cdot)=D^{he}\chi(x,\cdot)=\chi(x+he,\cdot)-\chi(x,\cdot)$ 
 and
\begin{equation}\label{eq:phi}
\varphi(\xi,\eta)=(\one_{\xi\cdot\eta>0}-\one_{\xi\cdot\eta<0})(\one_{\xi\wedge\eta>0}-\one_{\xi\wedge\eta<0}).
\end{equation}

The next Lemma describes the coerciveness properties of the function $\Delta(x,h,e)$, with respect to the averaged quantities
\[
\frac 12\int_{S^1} \xi \chi(x+he,\xi) d\xi = m(x+he)\qquad\text{ and }\qquad \frac 12\int_{S^1} \xi \chi(x,\xi) d\xi = m(x). 
\]
\begin{lemma}\label{l:lemma2.1} 
Given $m\colon\Omega\rightarrow S^1$, $x\in\Omega$ and $0<h<dist(x,\partial\Omega)$, it holds: 
\begin{equation*}
\Delta(x,h,e) \gtrsim \abs{m(x+he)-m(x)}^3 = |D^h_em(x)|^3.
\end{equation*}
\end{lemma}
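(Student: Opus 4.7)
The plan is to reduce the lemma to an elementary one-dimensional computation, using rotational invariance and a hidden symmetry of the weight. Parameterizing $\xi = e^{is}$, $\eta = e^{it}$, one has $\xi\cdot\eta = \cos(t-s)$ and $\xi\wedge\eta = \sin(t-s)$, so the weight $\varphi(\xi,\eta)(\xi\wedge\eta)$ becomes
\begin{equation*}
K(s,t) := |\sin(t-s)|\,\mathrm{sgn}(\cos(t-s)),
\end{equation*}
which is $\pi$-antiperiodic in each variable separately and even under $(s,t)\mapsto(t,s)$. By the rotational symmetry of the problem I may assume $m(x)=1$ and $m(x+he)=e^{i\theta}$ with $\theta\in[0,\pi]$; then $|D_e^h m(x)|^3 = 8\sin^3(\theta/2)$.

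A direct inspection of the two half-circles $\{\xi\cdot m(x)>0\}$ and $\{\xi\cdot m(x+he)>0\}$ gives
\begin{equation*}
D_e^h\chi(x,e^{is}) = \one_{A_+}(s) - \one_{A_-}(s),
\qquad A_+ := (\pi/2,\,\pi/2+\theta),\ A_- := A_+ - \pi,
\end{equation*}
i.e.\ the arc gained minus the arc lost. Expanding $D_e^h\chi(x,\xi)D_e^h\chi(x,\eta)$ into four cross-terms indexed by $A_\pm\times A_\pm$ and performing the substitution $s\mapsto s+\pi$ (resp.\ $t\mapsto t+\pi$) in each term to exploit $K(\cdot+\pi,\cdot)=-K(\cdot,\cdot)$, all four terms collapse to a single contribution:
\begin{equation*}
\Delta(x,h,e) = 4\iint_{A_+\times A_+} K(s,t)\,ds\,dt.
\end{equation*}

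Translating $A_+$ to $(0,\theta)$ and applying the standard identity $\iint_{(0,\theta)^2} f(t-s)\,ds\,dt = 2\int_0^\theta(\theta-w)f(w)\,dw$ valid for even $f$, together with $\sin w\ge 0$ on $[0,\pi]$, yields
\begin{equation*}
\Delta(x,h,e) = 8\int_0^\theta (\theta-w)\,\sin w\,\mathrm{sgn}(\cos w)\,dw.
\end{equation*}
This integral is elementary in two cases: for $\theta\in[0,\pi/2]$ the sign is constantly $+1$, giving $\Delta = 8(\theta-\sin\theta)$; for $\theta\in[\pi/2,\pi]$ one splits at $w=\pi/2$ and obtains $\Delta = 8(\theta+\sin\theta-2)$.

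It remains to compare with $|D_e^h m(x)|^3 = 8\sin^3(\theta/2)$. Both ratios
\begin{equation*}
\frac{\theta-\sin\theta}{\sin^3(\theta/2)}\ \text{on}\ [0,\pi/2], \qquad \frac{\theta+\sin\theta-2}{\sin^3(\theta/2)}\ \text{on}\ [\pi/2,\pi]
\end{equation*}
extend continuously to the compact endpoints (the first tending to $4/3$ as $\theta\to 0^+$ by Taylor expansion; the second equal to $\pi-2$ at $\theta=\pi$) and are strictly positive throughout, so taking the infimum yields the implicit universal constant. The main subtlety I expect is the second case: one must verify positivity of $\theta+\sin\theta-2$ for $\theta\in[\pi/2,\pi]$ despite the sign change of $\cos$ inside the integral. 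This follows from $\left.(\theta+\sin\theta-2)\right|_{\theta=\pi/2}=\pi/2-1>0$ together with monotonicity $(1+\cos\theta)\ge 0$.
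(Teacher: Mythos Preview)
Your proof is correct and follows the same strategy as the paper: reduce by $SO(2)$-invariance to a one-parameter family and compute the integral explicitly, obtaining the identical closed forms $8(\theta-\sin\theta)$ for $\theta\in[0,\pi/2]$ and $8(\theta+\sin\theta-2)$ for $\theta\in[\pi/2,\pi]$ (your $\theta$ is the paper's $2\beta$). Your route is slightly more streamlined: the paper introduces an auxiliary convolution $\gamma(\omega)=\int \overline\chi(\theta)\overline\chi(\theta+\omega)\,d\theta$ and evaluates it by cases, whereas you exploit the $\pi$-antiperiodicity of the kernel to collapse the four cross-terms to $4\iint_{A_+\times A_+}$ at once.
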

\begin{proof}
It holds $\Delta(x,h,e)=\Xi(m(x+he),m(x))$ where
\begin{equation*}
\Xi(m_1,m_2)=\iint_{S^1\times S^1}\varphi(\xi,\eta)(\xi\wedge\eta)\left(\one_{\xi\cdot m_1>0}-\one_{\xi\cdot m_2>0}\right)\left(\one_{\eta\cdot m_1>0}-\one_{\eta\cdot m_2>0}\right).
\end{equation*}
Therefore it suffices to prove that
\begin{equation*}
\Xi(m_1,m_2)\gtrsim \abs{m_1-m_2}^3\qquad\forall m_1,m_2\in S^1.
\end{equation*}
It is easily checked that $\Xi(m_1,m_2)=\Xi(m_2,m_1)$ and, since
\begin{equation}\label{eq:so2}
\varphi(R\xi,R\eta)=\varphi(\xi,\eta)\qquad\forall R\in SO(2),
\end{equation} 
that $\Xi(Rm_1,Rm_2)=\Xi(m_1,m_2)$ for all $R\in SO(2)$. Therefore it is enough to consider the case $m_1=e^{-i\beta}$, $m_2=e^{i\beta}$ for some $\beta\in [0,\pi/2]$ and to prove
\begin{equation*}
\Xi(e^{-i\beta},e^{i\beta})\gtrsim \beta^3\qquad\forall \beta\in [0,\pi/2].
\end{equation*}
The function $\varphi$ defined in  \eqref{eq:phi} that appears in the definition of $\Xi$ satisfies
\begin{align*}
\varphi(e^{i\theta},e^{i\psi})&=\widetilde\varphi(\psi-\theta),\qquad
\widetilde\varphi(\omega)  =\one_{\omega\in (0,\pi/2)\text{ mod }\pi}-\one_{\omega\in (\pi/2,\pi)\text{ mod }\pi}.
\end{align*}
We compute
\begin{align*}
\Xi(e^{-i\beta},e^{i\beta})
& = \int_{-\pi}^{\pi}\int_{-\pi}^{\pi}
\widetilde\varphi(\psi-\theta)\sin(\psi-\theta)\\
&\qquad\cdot\left(\one_{e^{i\theta}\cdot e^{-i\beta}>0}-\one_{e^{i\theta}\cdot e^{i\beta}>0}\right)
\left(\one_{e^{i\psi}\cdot e^{-i\beta}>0}-\one_{e^{i\psi}\cdot e^{i\beta}>0}\right)\, d\theta d\psi \\
& = \int_{-\pi}^{\pi}
\widetilde\varphi(\omega)\sin(\omega) \gamma(\omega) d\omega,\\
\text{where }\gamma(\omega)=&\int_{-\pi}^{\pi}\overline\chi(\theta)\overline\chi(\theta+\omega) \, d\theta,\\
\text{and }\overline\chi(\theta)&=\one_{e^{i\theta}\cdot e^{-i\beta}>0}-\one_{e^{i\theta}\cdot e^{i\beta}>0}.
\end{align*}
Note that $\overline\chi(\theta+\pi)=\overline\chi(-\theta)=-\overline\chi(\theta)$ for almost every $\theta\in\mathbb R$. Therefore $\omega\mapsto \widetilde\varphi(\omega)\sin(\omega)\gamma(\omega)$ is $\pi$-periodic and even, and
\begin{equation}\label{eq:Xigamma}
\Xi(e^{-i\beta},e^{i\beta})=4\int_{0}^{\pi/2}\widetilde\varphi(\omega)\sin(\omega)\gamma(\omega) d\omega.
\end{equation}
Moreover the integrand defining $\gamma$ is $\pi$-periodic in $\theta$, hence for all $\omega\in (0,\pi/2)$ we have
\begin{equation*}
\gamma(\omega)=2\int_0^{\pi}\overline\chi(\theta)\overline\chi(\theta+\omega)\, d\theta.
\end{equation*}
Assume first $\beta\in [0,\pi/4]$. Then for $\theta\in (0,\pi)$ it holds
\begin{equation*}
\overline\chi(\theta)\overline\chi(\theta+\omega)
=
\begin{cases}
\one_{\theta\in [\pi/2-\beta,\pi/2+\beta-\omega)} &\text{ if }\omega\in [0,2\beta],\\
0 &\text{ if }\omega\in [ 2\beta,\pi/2],\\\end{cases}
\end{equation*}
and we find
\begin{align*}
\gamma(\omega)&=
\begin{cases}
2\cdot(2\beta -\omega) & \text{ if } \omega\in [0,2\beta],\\ 
0 & \text{ if }\omega\in [2\beta,\pi/2].
\end{cases}\\
& =2\cdot (2\beta -\omega)_+\qquad\forall \omega\in [0,\pi/2],
\end{align*}
Plugging this into \eqref{eq:Xigamma} we deduce
\begin{align*}
\Xi(e^{-i\beta},e^{i\beta})
& =8\int_0^{2\beta}(2\beta-\omega)\sin\omega\, d\omega\\
&= 8\cdot(2\beta-\sin(2\beta))\gtrsim\beta^3,
\end{align*}
for all $\beta\in [0,\pi/4]$.

Consider now $\beta\in [\pi/4,\pi/2]$. For $\theta\in [0,\pi]$ we have
\begin{equation*}
\overline\chi(\theta)\overline\chi(\theta+\omega)
=
\begin{cases}
\one_{\theta\in [\pi/2-\beta,\pi/2+\beta-\omega)} &\text{ if }\omega\in [0,\pi-2\beta]\\
\one_{\theta\in [\pi/2-\beta,\pi/2+\beta-\omega)}
-\one_{\theta\in (3\pi/2-\beta-\omega,\pi/2+\beta)}&\text{ if }\omega\in [\pi-2\beta,\pi/2],
\end{cases}
\end{equation*}
and therefore
\begin{align*}
\gamma(\omega)&=
\begin{cases}
2\cdot(2\beta-\omega)&\text{ if }\omega\in [0,\pi-2\beta],\\
2\cdot (\pi-2\omega)&\text{ if }\omega\in [\pi-2\beta,\pi/2],
\end{cases}
\\
\Xi(e^{-i\beta},e^{i\beta})& = 8\int_0^{\pi-2\beta}(2\beta-\omega)\sin\omega\, d\omega + 8\int_{\pi-2\beta}^{\pi/2}(\pi-2\omega)\sin\omega\,d\omega \\
& =8\cdot(2\beta-(\pi-4\beta)\cos(2\beta)-\sin(2\beta)) \\
&\quad
+8\cdot (2\sin(2\beta) -2 + (\pi-4\beta)\cos(2\beta )) \\
& =8\cdot(\sin(2\beta) + 2\beta -2)\geq 
8\left(\frac{\pi}{2} -1\right)\gtrsim \beta^3,
\end{align*}
for all $\beta\in [\pi/4,\pi/2]$.
 \end{proof}

To obtain bounds for the integral of $\Delta(x,h,e)$ on $\Omega$, when $m$ satisfies the kinetic formulation  \eqref{eq:kin}, we use the following lemma, that 
estimates its derivative with respect to $h$:
\begin{lemma}\label{l:lemma2.2}
Suppose that $m$ satisfies the kinetic formulation of the eikonal equation \eqref{eq:kin}, and that 
$\Omega'\subset\subset
\Omega''\subset\subset \Omega$. 
We then have for all unit vectors $e$ and  $|h|\lesssim dist(\Omega',\partial\Omega'')$:
\begin{equation}\label{eq:derivativedelta}
\int_{\Omega'}\Delta(x,h,e)dx \lesssim |h|(1 + \|\sigma\|_{\mathcal M(\Omega''\times\R/2\pi\mathbb Z  )}),
\end{equation}
where the multiplicative constant depends on the distance between $\Omega'$ and $\partial\Omega''$.
\end{lemma}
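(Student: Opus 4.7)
The plan is to carry out a Varadhan-type interaction estimate adapted to our $2$D kinetic formulation. First I would introduce a cutoff $\zeta\in C_c^\infty(\Omega'')$, with $0\le\zeta\le 1$, $\zeta\equiv 1$ on $\Omega'$ and $|\nabla\zeta|\lesssim 1/\dist(\Omega',\partial\Omega'')$, and observe (as is apparent from the computation in the proof of Lemma~\ref{l:lemma2.1}) that $\Delta(x,h,e)\ge 0$ pointwise. It therefore suffices to estimate $\tilde I(h):=\int\zeta(x)\Delta(x,h,e)\,dx$ from above.

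Next I would expand the product $D_e^h\chi(x,\xi)D_e^h\chi(x,\eta)$ into four summands. After translating $x$ by $he$ where natural and exploiting the antisymmetries $\varphi(\eta,\xi)=-\varphi(\xi,\eta)$ and $\eta\wedge\xi=-\xi\wedge\eta$ (which together imply that the two ``cross'' contributions are equal), this rewrites as
\[
\tilde I(h)=\bigl[\tilde P(h)-\tilde S\bigr]+2\bigl[\tilde S-\tilde Q(h)\bigr],
\]
where $\tilde S$, $\tilde P(h)$, $\tilde Q(h)$ are the obvious quadratic averages. The first bracket is easy: $\tilde P(h)-\tilde S=\int[\zeta(y-he)-\zeta(y)]G(m(y))dy$, and the inner integral $G(m)=\iint\varphi(\xi,\eta)(\xi\wedge\eta)\chi(\cdot,\xi)\chi(\cdot,\eta)d\xi d\eta$ is $SO(2)$-invariant by \eqref{eq:so2}, hence constant, so that $|\tilde P(h)-\tilde S|\lesssim|h|$.

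The core task is to prove $|\tilde S-\tilde Q(h)|\lesssim|h|(1+\|\sigma\|)$ by writing the difference as $\int_0^h\tilde Q'(h')dh'$ and showing that $|\tilde Q'(h')|\lesssim 1+\|\sigma\|$ uniformly in $h'$. Differentiation and the change of variable $y=x+h'e$ give
\[
\tilde Q'(h')=\iint\varphi(\xi,\eta)(\xi\wedge\eta)\int\zeta(y-h'e)(e\cdot\nabla_y\chi)(y,\xi)\chi(y-h'e,\eta)\,dy\,d\xi d\eta.
\]
The key two-dimensional trick is to decompose $e=\tfrac{e\wedge\eta}{\xi\wedge\eta}\xi+\tfrac{\xi\wedge e}{\xi\wedge\eta}\eta$, so that the singular denominator is absorbed by the outer $(\xi\wedge\eta)$, leaving two pieces with bounded coefficients $(e\wedge\eta)$ and $(\xi\wedge e)$. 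The first piece contains $(\xi\cdot\nabla_y)\chi(y,\xi)$, which I would replace directly by $\partial_s\sigma$ through \eqref{eq:kin}. The second piece contains $(\eta\cdot\nabla_y)\chi(y,\xi)$, which has no direct kinetic interpretation; I would then integrate by parts in $y$ to move the $\eta$-derivative off of $\chi(y,\xi)$, which (after a change of variable $z=y-h'e$) produces a good term with $(\eta\cdot\nabla_z)\chi(z,\eta)=\partial_t\sigma$, plus an $\|\nabla\zeta\|_\infty$-bounded remainder of order $O(1)$.

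A final integration by parts in the angular variable (supplied by $\partial_s$ or $\partial_t$) then reduces each good contribution to a pairing of the measure $\sigma$ against $\tilde\varphi'(t-s)$ times a uniformly bounded function of $(y,s,t)$; since $\tilde\varphi$ is piecewise constant, $\tilde\varphi'$ is a signed measure of finite total mass on $\R/2\pi\Z$, and Fubini yields the desired uniform bound $|\tilde Q'(h')|\lesssim 1+\|\sigma\|_{\mathcal M(\Omega''\times\R/2\pi\Z)}$. The main obstacle I anticipate is the rigorous justification of these distributional integrations by parts against $\sigma$: the ``test functions'' produced by the manipulation contain $\chi$, which is only BV in $s$, and $\tilde\varphi$, which is measure-valued after differentiation. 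I would circumvent this by mollifying $\chi$ in the $x$-variable, which mollifies $\sigma$ accordingly without increasing its total mass on $\Omega''$ provided $\e<\dist(\Omega',\partial\Omega'')$, running the entire computation on the smooth approximants, and then passing to the limit $\e\to 0$.
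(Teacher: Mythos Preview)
Your strategy is correct and runs parallel to the paper's, with a slightly different organisation. The paper differentiates $\Delta$ directly in $h$, fixes $e=e_1$, and substitutes via the equation in the form $\xi_1\partial_1\chi=\nu-\xi_2\partial_2\chi$ (the $\xi_2\partial_2$ piece is then integrated by parts in $x_2$); you instead split off the diagonal terms $\tilde P,\tilde S$ first (which, as you note, cancel by the $SO(2)$-invariance of $G$) and then treat $\tilde Q'$ via the coordinate-free decomposition $e=\tfrac{e\wedge\eta}{\xi\wedge\eta}\xi+\tfrac{\xi\wedge e}{\xi\wedge\eta}\eta$. The two computations produce the same ``$\sigma$ against $\tilde\varphi'$ times bounded'' structure and the same boundary terms; your decomposition is arguably cleaner since it avoids the choice of a frame.

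There is, however, a genuine technical gap in your regularisation scheme. Mollifying only in $x$ does \emph{not} regularise $\chi_\e$ in the angular variable: if $m$ is constant then $\chi_\e(x,e^{is})=\one_{e^{is}\cdot m>0}$ is still a step function in $s$. Hence the ``test function'' you produce is at best Lipschitz in $s$, and the pairing $\langle\partial_s\sigma_\e,g\rangle=-\langle\sigma_\e,\partial_s g\rangle$ is not immediately justified (you need $g\in C^1$ in $s$, since $\partial_s\sigma$ is an order-one distribution). The paper fixes this by mollifying $\tilde\varphi$ as well, $\tilde\varphi_\delta=\tilde\varphi*\rho_\delta$, chosen so that $\|\tilde\varphi_\delta'\|_{L^1(\R/2\pi\Z)}\le 8$ uniformly in $\delta$; the computation is then carried out at the $(\e,\delta)$ level, producing a bound independent of both parameters, and one passes to the limit by dominated convergence. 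You identified exactly this difficulty in your last paragraph, but the second mollification is the missing ingredient in your proposed cure.
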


\begin{proof}
Assume \eqref{eq:kin}:
\[
e^{is}\cdot	\nabla_x\one_{e^{is}\cdot	m(x)>0} = \partial_s\sigma \qquad\text{ in }\mathcal D '(\Omega\times\R/2\pi\mathbb Z  ),
\]
and let us assume to have intermediate domains $\widetilde\Omega, \Omega''$ with 
$\Omega'\subset\subset\widetilde\Omega\subset\subset\Omega''\subset\subset\Omega$
and such that the distances among the boundaries of the first three are comparable.
We perform the calculation of $\partial_h\int_{\Omega'} \Delta(x,h,e)dx$ for a regularized integrand, namely:
\begin{itemize}
\item we regularize the equation \eqref{eq:kin} by convolving with respect to $x$ with a smooth approximation of the identity $\rho_\e$:
\begin{equation}\label{eq:kinreg}
e^{is}\cdot	\nabla_x\chi_\e(x,e^{is}) = \partial_s\sigma_\e,\qquad  \chi_\e = 
\left(\one_{e^{is}\cdot	m(x)>0}\right)\underset{x}{*}\rho_\e,\quad\sigma_\e =\sigma\underset{x}{*}\rho_\e;
\end{equation}
here $\e<dist(\Omega,\partial\Omega'')$; 
\item we approximate $\varphi$ \eqref{eq:phi}  by a smooth $\varphi_\delta$. The calculations below are valid for a generic $\varphi$ and only use the skew-symmetry property $\varphi(\xi,\eta)=-\varphi(\eta,\xi)$. 
 Assuming in addition the $SO(2)$ invariance property \eqref{eq:so2}, and 
  parametrizing with the angle between $\xi$ and $\eta$, 
  these conditions amount to require that 
 $\widetilde\varphi\colon s\mapsto \varphi(1,e^{is}) $ is odd and $2\pi$ periodic. In turn, a convolution on the real line with a smooth even kernel, at scale $\delta$, preserves both these properties.   Explicitly, we set
\begin{equation*}
\varphi_\delta(e^{i\theta},e^{i\psi})=\widetilde\varphi_\delta(\psi-\theta),\qquad\widetilde\varphi_\delta=\widetilde\varphi * \rho_\delta,
\end{equation*}
for some smooth even kernel $\rho$. This approximation has the following properties:
\begin{equation}\label{eq:phidelta}
\widetilde\varphi_\delta\to\widetilde\varphi\text{ a.e.},\qquad
\abs{\widetilde\varphi_\delta}\leq 1,\qquad \norm{\widetilde\varphi_\delta'}_{L^1(\R/2\pi\Z)}\leq 8.
\end{equation} 

\end{itemize}

The explicit dependence of the function $\Delta$ on the parameters $\e,\delta$ is omitted in the first calculations.

We assume without loss of generality that $e = e_1$ and use the notations $\chi^h(x,\xi)=\chi(x+he_1,\xi)$ 
and  $D_1^h\chi=\chi^h-\chi$.  Let $x\in\tilde\Omega$ and $|h|<\dist(\tilde\Omega,\partial\Omega'')$. 
Using the skew-symmetry of $\varphi$, we have
\begin{equation*}
  \begin{split}
\frac{\partial}{\partial h}\Delta(x,e,h)&=\frac{\partial}{\partial h} \iint_{S^1\times S^1}\varphi(\xi,\eta)(\xi\wedge\eta) D_{1}^h\chi(x,\xi) D_{1}^h\chi(x,\eta) d\xi d\eta \\
&=  \iint_{S^1\times S^1}\varphi(\xi,\eta)(\xi\wedge\eta) [ \partial_1\chi^h(x,\xi) D^{h}_1\chi(x,\eta) +\partial_1\chi^h(x,\eta) D^{h}_1\chi(x,\xi)   ] d\xi d\eta \\
&= 2 \iint_{S^1\times S^1}\varphi(\xi,\eta)(\xi_1 \eta_2) [ \partial_1\chi^h(x,\xi) D^{h}_1\chi(x,\eta) +\partial_1\chi^h(x,\eta) D^{h}_1\chi(x,\xi)   ] d\xi d\eta. 
  \end{split}
\end{equation*}
Letting $\nu(x,e^{is}):=\partial_s\sigma(x,s)$, we use the
the equation \eqref{eq:kin} in the form 
\begin{equation}\label{eq:kinbis}
\xi_1\partial_1\chi(x,\xi) + \xi_2\partial_2\chi(x,\xi) =\nu(x,\xi),
\end{equation}
to replace $\xi_1\partial_1\chi^h(x,\xi)$ in the above and obtain
\begin{equation*}
  \begin{split}
\frac{\partial}{\partial h}\Delta(x,e,h)&= 2 \iint_{S^1\times S^1}\varphi(\xi,\eta) \eta_2 [(\nu^h(x,\xi)-\xi_2\partial_2\chi(x,\xi))D^{h}_1\chi(x,\eta) +\xi_1\partial_1\chi^h(x,\eta) D^{h}_1\chi(x,\xi)   ] d\xi d\eta \\
  &=  2 \iint_{S^1\times S^1}\varphi(\xi,\eta) \eta_2 [(\nu^h(x,\xi)-\xi_2\partial_2\chi(x,\xi))D^{h}_1\chi(x,\eta) -\xi_1\chi^h(x,\eta)\partial_1 D^{h}_1\chi(x,\xi)   ] d\xi d\eta \\
  &\quad + \partial_1\left[  2 \iint_{S^1\times S^1}\varphi(\xi,\eta) \eta_2 \xi_1\chi^h(x,\eta) D^{h}_1\chi(x,\xi)    d\xi d\eta \right] 
  =:I_1 + \partial_1 A_1.
    \end{split}
\end{equation*}
The term $\partial_1 A_1$ is a boundary term and will be treated at the end. 
Focusing on $I_1$, we can expand $\xi_1\partial_1 D^{h}_1\chi(x,\xi)$ and use \eqref{eq:kinbis} to deduce
\begin{equation*}
  \begin{split}
  I_1 &= 2 \iint_{S^1\times S^1}\varphi(\xi,\eta) \eta_2 \Big[(\nu^h(x,\xi)-\xi_2\partial_2\chi(x,\xi))(\chi^h(x,\eta)-\chi(x,\eta)) \\
  &\qquad  -   (\nu^h(x,\xi) - \nu(x,\xi) -\xi_2\partial_2\chi^h(x,\xi) +\xi_2\partial_2\chi(x,\xi))\chi^h(x,\eta)    \Big] d\xi d\eta \\
  &=2 \iint_{S^1\times S^1}\varphi(\xi,\eta) \eta_2 \Big[-\nu^h(x,\xi)\chi(x,\eta)  + \nu(x,\xi)\chi^h(x,\eta) \\
  &\qquad  +\xi_2\partial_2\chi^h(x,\xi)\chi(x,\eta) - \xi_2\partial_2\chi(x,\xi)\chi^h(x,\eta)
   \Big] d\xi d\eta \\
    &=2 \iint_{S^1\times S^1}\varphi(\xi,\eta) \eta_2 [-\nu^h(x,\xi)\chi(x,\eta)  + \nu(x,\xi)\chi^h(x,\eta) ] \\
    &\quad +2 \iint_{S^1\times S^1}\varphi(\xi,\eta) \eta_2 \xi_2[\partial_2\chi^h(x,\xi)\chi(x,\eta) - \partial_2\chi(x,\xi)\chi^h(x,\eta)]d\xi d\eta .\\
   \end{split}
\end{equation*}
Exchanging $\xi$ and $\eta$ only in the last term of the second integral, we can rewrite
\begin{equation*}
  \begin{split}
  I_1 
    &=2 \iint_{S^1\times S^1}\varphi(\xi,\eta) \eta_2 [-\nu^h(x,\xi)\chi(x,\eta)  + \nu(x,\xi)\chi^h(x,\eta) ] \\
    &\quad +2 \iint_{S^1\times S^1}\varphi(\xi,\eta) \eta_2 \xi_2[\partial_2\chi^h(x,\xi)\chi(x,\eta) + \partial_2\chi(x,\eta)\chi^h(x,\xi)]d\xi d\eta \\
    &=2 \iint_{S^1\times S^1}\varphi(\xi,\eta) \eta_2 [-\nu^h(x,\xi)\chi(x,\eta)  + \nu(x,\xi)\chi^h(x,\eta) ] \\
    &\quad +\partial_2\left[2 \iint_{S^1\times S^1}\varphi(\xi,\eta) \eta_2 \xi_2 \chi(x,\eta)\chi^h(x,\xi) d\xi d\eta\right] =: I_2 + \partial_2 A_2.
   \end{split}
\end{equation*}
Therefore we have $\frac{\partial}{\partial h}\Delta(x,h,e) = I_2 +  \partial_1 A_1+ \partial_2 A_2$, where 
\[
|A| = |(A_1,A_2)|\leq 8\pi\qquad \text{ pointwise }\forall (x,h,e).
\]
The most important term in the estimate is $I_2$, since the extra term is a divergence $\nabla_x\cdot A$ of a bounded vectorfield, hence 
it can be treated as a boundary term. 
In polar coordinates $I_2$ becomes:
\begin{equation*}
  \begin{split}
I_2 & = 2 \iint_{[0,2\pi[\times[0,2\pi[}
\widetilde\varphi(\psi-\theta) \sin\psi [-\partial_\theta\sigma^h(x,\theta) \chi(x,e^{i\psi})  + \partial_\theta\sigma(x,\theta)\chi^h(x,e^{i\psi}) ] 
d\theta d\psi \\
&= 2 \iint_{[0,2\pi[\times[0,2\pi[}\widetilde\varphi'(\psi-\theta) \sin\psi  [-\sigma^h(x,\theta) \chi(x,e^{i\psi})  + \sigma(x,\theta)\chi^h(x,e^{i\psi}) ] 
d\theta d\psi.
 \end{split}
\end{equation*}
Recall now that the above was derived for an approximation $\varphi_\delta$ of $\varphi$ and for a solution of the 
 regularized kinetic equation \eqref{eq:kinreg} at scale $\e$. Writing this dependence explicitly we have:
\begin{align*}
I_2 = I^{\e,\delta}_2
& = -2\int_0^{2\pi}\sigma_\e^h(x,e^{i\theta})\int_0^{2\pi}\widetilde\varphi_\delta'(\psi-\theta)\chi_\e(x,e^{i\psi})\sin\psi\,d\psi \, d\theta \\
&\quad + 2\int_0^{2\pi}\sigma_\e(x,e^{i\theta})\int_0^{2\pi}\widetilde\varphi_\delta'(\psi-\theta)\chi_\e^h(x,e^{i\psi})\sin\psi\,d\psi \, d\theta.
\end{align*}
Recalling \eqref{eq:phidelta} and the fact that $\abs{\chi_\e}\leq 1$ a.e., we deduce
\begin{equation*}
\abs{I_2^{\e,\delta}}\lesssim \int_0^{2\pi}\left(\abs{\sigma_\e^h(x,\theta)}+\abs{\sigma_\e(x,\theta)}\right)\,d\theta.
\end{equation*}
Plugging this estimate into the identity $\partial_h \Delta^{\e,\delta} = I_2^{\e,\delta} +  \nabla_x \cdot A^{\e,\delta}$ yields
\begin{equation*}
\frac{\partial}{\partial h}\Delta^{\e,\delta}(x,h,e)
\lesssim \int_0^{2\pi}\left(\abs{\sigma_\e^h(x,\theta)}+\abs{\sigma_\e(x,\theta)}\right)\,d\theta + \nabla_x \cdot A^{\e,\delta}.
\end{equation*}
Recalling that $A^{\e,\delta}$ is a uniformly bounded vector field, we may test the above against any nonnegative $\gamma \in C^\infty_c(\widetilde\Omega)$ and obtain
\begin{align*}
\frac{\partial}{\partial h} \int_\Omega \gamma(x)\Delta^{\e,\delta}(x,h,e)\, dx&\lesssim\norm{\gamma}_{C^0}\left(\norm{\sigma_\e^h}_{L^1(\widetilde\Omega\times\R/2\pi\Z)}+\norm{\sigma_\e}_{L^1(\widetilde\Omega\times\R/2\pi\Z)}\right) + \norm{\nabla\gamma}_{C^0}\\
&\lesssim \norm{\gamma}_{C^0}\norm{\sigma}_{\mathcal M(\Omega''\times\R/2\pi\Z)} +\norm{\nabla\gamma}_{C^0},
\end{align*}
for $\abs{h}+\e\leq \dist(\widetilde\Omega,\partial\Omega'')$ and $\delta>0$. Integrating with respect to $h$  we find that 
\begin{equation*}
\frac{1}{\abs{h}}\int_\Omega\gamma(x)\Delta^{\e,\delta}(x,h,e)\, dx \lesssim \norm{\gamma}_{C^0}\norm{\sigma}_{\mathcal M(\Omega''\times\R/2\pi\Z)} +\norm{\nabla\gamma}_{C^0}.
\end{equation*}
By dominated convergence we may pass to the limit $\e,\delta\to 0$ in the left-hand side. Then it remains to choose $\gamma\equiv 1$ in $\Omega'$  
to obtain the claimed estimate \eqref{eq:derivativedelta}.
\end{proof} 

We can now prove Proposition~\ref{p:2}: 
\begin{proof}
The proof follows combining the results of Lemmas \ref{l:lemma2.1} and \ref{l:lemma2.2}. For $t < \dist(\Omega',\partial\Omega'')$, 
\begin{align*}
 [N_{t}(m,\Omega')]^3 & =  \frac 1t \sup_{|e|=1,\abs{h}\leq t}\int_{\Omega'}|D^h_em(x)|^3dx \\
& \lesssim \frac 1t \sup_{|e|=1,\abs{h}\leq t} \int_{\Omega'}\Delta(x,e,h) dx \\
& \lesssim 1+\|\sigma\|_{\mathcal M(\Omega''\times \R/2\pi\mathbb Z)}.
\end{align*}
For other values of $t$ up to $1$, the triangular inequality and the boundedness of $m$ yield a trivial control on $N_t(f,\Omega)$.
Together, these estimates give the desired bound on the local Besov norm $B^{1/3}_{3,\infty}(\Omega')$. 
\end{proof}

\subsection{Besov regularity implies finite entropy}

For the proofs of the next propositions and lemmas, we let $m_\e := m*\rho_\e$ be a regularization with a standard kernel 
(with $\spt(\rho)\subset B_1$ and $\nabla\rho = 0$ in $B_{1/2}$). 

\begin{proposition}\label{p:3}
If $m$ solves \eqref{eq:m} and belongs to the space $B^{1/3}_{3,\infty,\loc}(\Omega)$, then $m$ has locally finite strong entropy production \eqref{eq:fep}:
\[ 
 \bigvee_{\Phi\in ENT, \|D^2\Phi\|_{\infty}\leq 1} \|{\nabla}\cdot\Phi(m)\| (A) \lesssim 
  [m]^3_{B^{\frac 13}_{3,\infty}(A)} \qquad\text{for } A\subset\subset\Omega.   
  \]
\end{proposition}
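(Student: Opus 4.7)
My plan is a commutator estimate in the spirit of Constantin--E--Titi and, more directly, of De Lellis--Ignat~\cite{delellisignat15}. Fix $\Phi\in ENT$ with $\|D^2\Phi\|_\infty\leq 1$ and open sets $A\subset\subset\tilde A\subset\subset\Omega$. The entropy condition $e^{it}\cdot\partial_t\Phi(e^{it})=0$ rewrites as $\partial_t\Phi(e^{it})=\beta(t)\,ie^{it}$ for a scalar $\beta\colon S^1\to\R$, and after subtracting the affine part of $\Phi$ (which does not affect $\nabla\cdot\Phi(m)$) one has $\|\beta\|_\infty\lesssim\|D^2\Phi\|_\infty$; this gives the classical pointwise formula $\nabla\cdot\Phi(n)=\beta(n)\,\nabla\cdot n$ for any smooth $n\colon A\to S^1$. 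Fix a cutoff $\eta\in C_c^\infty((1/2,\infty))$ with $\eta\equiv 1$ near $1$ and introduce the extension $\tilde\Phi(z):=\eta(|z|)\,\Phi(z/|z|)\in C^2(\R^2;\R^2)$, so that $\|\tilde\Phi\|_{C^2}\lesssim\|D^2\Phi\|_\infty$ and $\tilde\Phi(m)=\Phi(m)$ a.e.\ on $\Omega$. Setting $m_\e:=m*\rho_\e$ (still divergence-free) and $\hat m_\e:=m_\e/|m_\e|$, the identity $\nabla\cdot\hat m_\e=-m_\e\cdot\nabla|m_\e|^2/(2|m_\e|^3)$ combined with the entropy formula applied to $\hat m_\e$ yields
\[
\nabla\cdot\tilde\Phi(m_\e)\,=\,\eta'(|m_\e|)\,\Phi(\hat m_\e)\cdot\nabla|m_\e|\,-\,\eta(|m_\e|)\,\frac{\beta(\hat m_\e)}{2|m_\e|^3}\,m_\e\cdot\nabla|m_\e|^2,
\]
both terms being supported in $\{|m_\e|\geq 1/2\}$ where all reciprocal powers of $|m_\e|$ are bounded.

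Using $\nabla\cdot m_\e=0$, one rewrites $m_\e\cdot\nabla|m_\e|^2=-\nabla\cdot(\tau_\e m_\e)$ where $\tau_\e:=1-|m_\e|^2$. Testing $\nabla\cdot\tilde\Phi(m_\e)$ against $\zeta\in C_c^\infty(A)$ and integrating by parts, the boundary term is bounded by $\|\nabla\zeta\|_\infty\|\tau_\e\|_{L^1(A)}=O(\e^{2/3})$ and thus vanishes in the distributional limit $\e\to 0$; the remaining contribution is estimated pointwise, on $\{|m_\e|\geq 1/2\}$, by $|\tau_\e|\,|\nabla m_\e|$ up to a multiplicative constant depending only on $\|D^2\Phi\|_\infty$ and $\eta$. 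The Besov assumption enters through the matched pair of interpolation inequalities
\[
\|\tau_\e\|_{L^{3/2}(\tilde A)}\,\lesssim\,\e^{2/3}\,[m]^2_{B^{1/3}_{3,\infty}(\tilde A)},\qquad \|\nabla m_\e\|_{L^3(\tilde A)}\,\lesssim\,\e^{-2/3}\,[m]_{B^{1/3}_{3,\infty}(\tilde A)},
\]
the first from $\tau_\e(x)=\int\rho_\e(x-y)|m(y)-m_\e(x)|^2\,dy$ together with Jensen's inequality and the cubic Besov increment $\int_A|D^z m|^3\,dx\lesssim|z|\,[m]^3$, the second from $\nabla m_\e=\int\nabla\rho_\e(x-y)(m(y)-m(x))\,dy$ and H\"older with the same increment. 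By H\"older their product is $\int|\tau_\e||\nabla m_\e|\lesssim[m]^3$; the $\eta'$-term is treated analogously, using the Chebyshev-type bound $|\{|m_\e|<a\}|\leq\|\tau_\e\|_{L^{3/2}}^{3/2}/(1-a^2)^{3/2}\lesssim\e\,[m]^3$ for $a<1$ and the same Besov gradient estimate. Altogether $\|\nabla\cdot\tilde\Phi(m_\e)\|_{L^1(A)}\lesssim[m]^3_{B^{1/3}_{3,\infty}(\tilde A)}$ uniformly in $\e$.

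Since $\tilde\Phi(m_\e)\to\Phi(m)$ in $L^1_\loc$, $\nabla\cdot\tilde\Phi(m_\e)\to\nabla\cdot\Phi(m)$ in $\mathcal D'(A)$; combined with the uniform $L^1$ bound, weak-$*$ compactness and lower semicontinuity of total variation give $\nabla\cdot\Phi(m)\in\mathcal M_\loc(\Omega)$ with $\|\nabla\cdot\Phi(m)\|(A)\lesssim[m]^3_{B^{1/3}_{3,\infty}(\tilde A)}$. The constant is universal in $\Phi$ (it only depends on $\|D^2\Phi\|_\infty$), so taking the supremum over $\{\Phi\in ENT:\|D^2\Phi\|_\infty\leq 1\}$ and using the definition of the join $\bigvee_\Phi$ from~\S\ref{s:not} promotes the estimate to the strong finite entropy production bound \eqref{eq:fep}. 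The principal difficulty is the exact matching of the $\e^{\pm 2/3}$ factors in the two Besov inequalities: this is precisely the critical scaling of $B^{1/3}_{3,\infty}$ announced in the introduction --- any slight improvement in summability produces an extra $\e$-gain and forces the entropy production to vanish (recovering the De~Lellis--Ignat rigidity on $W^{1/3,3}\subset B^{1/3}_{3,c_0}$), whereas the endpoint yields only finiteness. A secondary technical point is the handling of the small-$|m_\e|$ region and the $\eta'$ correction forced by the cutoff, both absorbed via Chebyshev on $\{\tau_\e\geq 1-a^2\}$ as above.
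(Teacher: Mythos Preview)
Your approach is essentially the paper's: extend $\Phi$ radially by a cutoff, express $\nabla\cdot\tilde\Phi(m_\e)$ in terms of $\nabla(1-|m_\e|^2)$, integrate by parts, and close via the matched pair $\|1-|m_\e|^2\|_{L^{3/2}}\lesssim\e^{2/3}[m]^2$ and $\|\nabla m_\e\|_{L^3}\lesssim\e^{-2/3}[m]$ (exactly the paper's Lemmas~\ref{l:lemma3.1}--\ref{l:lemma3.2}). The paper packages your explicit computation with $\hat m_\e$ and $\beta$ into the single identity $\nabla\cdot\tilde\Phi(m_\e)=\Psi(m_\e)\cdot\nabla(1-|m_\e|^2)$ with $|D\Psi|\lesssim|D^2\tilde\Phi|$, borrowed from~\cite{DMKO01,delellisignat15}; this absorbs your separate Chebyshev treatment of the $\eta'$ term, but the content is identical.

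Two small corrections. First, the ``affine subtraction'' parenthetical is both unnecessary and not quite right: subtracting a general linear map $Lm$ \emph{does} change $\nabla\cdot\Phi(m)$ (only constants and scalar multiples of $m$ are harmless, since $m$ is merely divergence-free, not curl-free). Fortunately you don't need it: $\partial_t\Phi$ is periodic with zero mean, so Poincar\'e already gives $\|\beta\|_\infty=\|\partial_t\Phi\|_\infty\lesssim\|\partial_t^2\Phi\|_\infty$. Second, the passage to the join $\bigvee_\Phi$ is not ``taking the supremum'': by definition one must bound $\sum_j\|\nabla\cdot\Phi_j(m)\|(U_j)$ over disjoint $U_j\subset A$ with \emph{different} $\Phi_j$, and your right-hand side $[m]^3_{B^{1/3}_{3,\infty}(\tilde A)}$ involves a sup in $z$ and is not additive over such families. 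The fix (which the paper carries out, if somewhat tersely) is to retain the averaged intermediate quantity $\e^{-3}\int_{U}\fint_{B_\e}|D^zm|^3\,dz\,dx$ that actually comes out of the proofs of your two interpolation inequalities; this \emph{is} additive over disjoint $U_j$, and only at the end does one pass to $N_\e(m,A)^3$.
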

\begin{proof}
For a given $\Phi \in ENT$, we consider its extension $\widetilde\Phi\in C_c^2(\R^2,\R^2)$ given in polar coordinates by $\widetilde\Phi(re^{i\theta})=\eta(r)\Phi(e^{i\theta})$,
where $\eta$ is a fixed cut-off function $\eta\in C_c^\infty(0,\infty)$ satisfying $\eta\equiv 0$ outside $(1/2,2)$ and $\eta(1)=1$.

Following \cite{DMKO01,delellisignat15}, for the mollified field $m_\e$ we can single out in the entropy production the contribution of the radial oscillation:
\[
{\nabla}\cdot\widetilde\Phi(m_\e) = \Psi(m_\e)\cdot{\nabla}(1-|m_\e|^2),
\]
where $\Psi \in C^1_c(\R^2,\R^2)$ is a regular vectorfield. Given  a test function $\phi\in C^\infty_c(\Omega)$ we can integrate by parts
\[
\langle\nabla\cdot\widetilde\Phi(m_\e),\phi\rangle = -\int_\Omega \phi(x)\Psi(m_\e(x))\cdot{\nabla}(1-|m_\e(x)|^2)dx =: A_\e[\phi] + B_\e[\phi],
\]
 where
\begin{equation*}
\begin{split}
A_\e[\phi] &=  \int_\Omega\nabla\phi(x)\cdot\Psi(m_\e(x))(1-|m_\e(x)|^2)dx,\\
B_\e[\phi] &=  \int_\Omega \phi(x){\nabla}\cdot[\Psi(m_\e(x))](1-|m_\e(x)|^2)dx \\
&=\int_\Omega \phi(x) Tr[D\Psi(m_\e(x)){\nabla} m_\e(x)] (1-|m_\e(x)|^2)dx .
\end{split}
\end{equation*}

While $A_\e[\phi]\rightarrow 0$, trivially because $|m|=1$ almost everywhere, the second integral $B_\e[\phi]$ can be bounded by
\[
B_\e[\phi]\lesssim \|\phi\|_{L^\infty}\|D\Psi\|_{L^\infty}\int_{ \spt(\phi)} |\nabla m_\e(x)||1-|m_\e(x)|^2| dx.
\]
Since $3$ and $\frac 32$ are dual exponents, using Lemmas \ref{l:lemma3.1} and \ref{l:lemma3.2} below,
 the last integral can be bounded by 
 \[
\|1-|m_\e|^2\|_{L^{\frac 32}(\spt(\phi))} \|\nabla m_\e\|_{L^3(\spt(\phi))}\lesssim
N_{\e}(m,\spt(\phi))^3.  
 \]
Noting that $\abs{D\Psi}\lesssim\vert D^2\widetilde \Phi\vert$ and letting $\e\to 0$ we deduce that
\begin{equation*}
\abs{\langle\nabla\cdot\Phi(m),\phi\rangle}\lesssim \norm{\phi}_{L^\infty}\norm{D^2\Phi}_{\infty}\liminf_{\e\to 0} N_\e(m,\spt(\phi))^3,
\end{equation*}
and therefore
\begin{equation*}
\norm{\nabla\cdot\Phi(m)}(U)\lesssim \norm{D^2\Phi}_{\infty}\liminf_{\e\to 0} N_\e(m,\overline U)^3,
\end{equation*}
for all $U\subset\subset\Omega$.
  Note that $N_\e(m,\overline U)$ involves integrals with respect to $x$ over the sets $\overline U$ and $\overline U+\e y$, hence given a finite family of open and distant sets $U_1,\dots,U_k\subset\subset A\subset\subset \Omega$, and 
 a corresponding family of entropies $\Phi_1,\dots,\Phi_k$ with $\norm{D^2\Phi_j}_\infty\leq 1$, if $\e$ is small enough it holds
 \[
 \sum_j \|\nabla\cdot\Phi_j(m)\|(U_j) \lesssim \liminf_{\e\to 0} \sum_j N_{\e}(m,U_j)^3 
 = \liminf_{\e\to 0} N_{\e}(m,A)^3  .
 \]
Recalling the definitions of the least upper bound measure and of the Besov seminorm, this implies the conclusion of Proposition~\ref{p:3}.
\end{proof}

In the proof of Proposition \ref{p:3}, we used the two following lemmas on the growth of certains norms of the regularized field $m_\e$. 
Their proof is an adaptation to the Besov scale of corresponding statements for Sobolev functions, treated in \cite{delellisignat15}. 
\begin{lemma}\label{l:lemma3.1}
If $m \in B^{1/3}_{3,\infty,\loc}(\Omega)$, and  $\Omega'\subset\subset\Omega$,  
then for every $\e\lesssim dist (\Omega', \partial \Omega)$ 
\begin{equation*} 
\int_{	\Omega'}  |\nabla m_\e|^3 dx\lesssim \e^{-2} N_{\e}(m,\Omega')^3.
\end{equation*}
\end{lemma}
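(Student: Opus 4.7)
The plan is to exploit the vanishing mean property of $\nabla\rho_\e$ to rewrite $\nabla m_\e$ as an average of finite differences of $m$, so that the full information on the increments $D^zm$ (and not on $m$ alone) is available, and then conclude via Hölder, Fubini and the very definition of $N_\e$.

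First I would write $\nabla m_\e(x)=\int\nabla\rho_\e(y)[m(x-y)-m(x)]\,dy$, using $\int\nabla\rho_\e=0$. Since $\rho_\e(y)=\e^{-2}\rho(y/\e)$ and $\spt\nabla\rho\subset B_1\setminus B_{1/2}$, the kernel satisfies $|\nabla\rho_\e|\lesssim \e^{-3}\mathbf 1_{B_\e}$. By the hypothesis $\e\lesssim\dist(\Omega',\partial\Omega)$, the translates $m(x-y)$ are well-defined for $x\in\Omega'$ and $|y|\le\e$, so no boundary issue appears.

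Second, I would upgrade the pointwise bound from $L^1$ to $L^3$ in the $y$-average by Hölder's inequality: since $|B_\e|^{2/3}\sim\e^{4/3}$, this gives
\[
|\nabla m_\e(x)|^3\lesssim \e^{-9}\cdot \e^4\cdot \int_{B_\e}|m(x-y)-m(x)|^3\,dy = \e^{-5}\int_{B_\e}|m(x-y)-m(x)|^3\,dy.
\]
Then I would integrate in $x\in\Omega'$ and swap the order of integration by Fubini. For each fixed $y$ with $|y|\le\e$, the inner integral is exactly $\|D^{-y}m\|_{L^3(\Omega')}^3$, which by the very definition of $N_\e(m,\Omega')=\sup_{|z|\le\e}\e^{-1/3}\|D^zm\|_{L^3(\Omega')}$ is bounded by $\e\,N_\e(m,\Omega')^3$. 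A final count of powers, $\e^{-5}\cdot|B_\e|\cdot\e\sim\e^{-2}$, yields the claim.

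I do not expect any genuine obstacle in this lemma: the only delicate points are the bookkeeping of the powers of $\e$ (in particular using $\nabla\rho=0$ on $B_{1/2}$ only to localize the support of $\nabla\rho_\e$, not for cancellation — the zero-mean of $\nabla\rho_\e$ is what matters), and making sure that $D^{-y}m(x)$ is truly represented by $m(x-y)-m(x)$ rather than $0$, which is guaranteed by $\e\lesssim\dist(\Omega',\partial\Omega)$.
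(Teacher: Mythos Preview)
Your proof is correct and follows essentially the same route as the paper: the paper also subtracts $m(x)$ using $\int\nabla\rho_\e=0$, obtains the pointwise bound $|\nabla m_\e(x)|\lesssim\e^{-3}\int_{B_\e\setminus B_{\e/2}}|m(x+z)-m(x)|\,dz$, applies Jensen (your H\"older) to pass to the cube, integrates over $\Omega'$, and closes with the definition of $N_\e$. The only cosmetic differences are the sign of the integration variable and the paper's use of the annular support $B_\e\setminus B_{\e/2}$, which, as you correctly note, is not needed here.
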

\begin{proof}
As in \cite[Proof of Proposition 3, Step 6(ii)]{delellisignat15}, for $\e$ small enough we have the pointwise bound:
\[
|\nabla m_\e(x)|\leq \frac{\|\nabla \rho\|_\infty}{\e^3}\int_{B_\e(0)\setminus B_{\e/2}(0)}|m(x+z)-m(x)|dz .
\] 
Applying Jensen inequality and integrating in $\Omega'$ one obtains
\begin{align*}
\int_{\Omega'}|\nabla m_\e(x)|^3  dx &\lesssim 
\frac{1}{\e^5}
\int_{\Omega'}\int_{B_\e(0)\setminus B_{\e/2}(0)}{|m(x+z)-m(x)|^3}dz dx \\
& = \frac{1}{\e^3}
\int_{\Omega'}\fint_{B_\e(0)}{|D^z m(x)|^3}dz dx\\
&\leq \frac{1}{\e^2} N_{\e}(m,\Omega')^3.
\end{align*}
\end{proof}

\begin{lemma}\label{l:lemma3.2}
With the notations of Lemma~\ref{l:lemma3.1}, it holds:
\begin{equation*}
\int_{	\Omega'} (1- | m_\e|^2)^{3/2} dx\lesssim \e N_{\e}(m,\Omega')^3.
\end{equation*}
\end{lemma}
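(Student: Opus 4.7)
The proof rests on one pointwise identity that exploits the constraint $|m|=1$ a.e.\ in a crucial way. Since $m_\e(x)=\int\rho_\e(x-y)m(y)\,dy$ and $\int\rho_\e=1$, we have the trivial equality $1=\int\rho_\e(x-y)|m(y)|^2\,dy$. Expanding both this and $|m_\e(x)|^2=\iint\rho_\e(x-y)\rho_\e(x-z)\,m(y)\cdot m(z)\,dy\,dz$, symmetrizing in $(y,z)$ via $|m(y)|^2+|m(z)|^2-2\,m(y)\cdot m(z)=|m(y)-m(z)|^2$, and changing variables $u=x-y$, $v=x-z$ yields
\begin{equation*}
1-|m_\e(x)|^2=\tfrac{1}{2}\iint\rho_\e(u)\rho_\e(v)\,|m(x-u)-m(x-v)|^2\,du\,dv.
\end{equation*}

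Armed with this identity, I would take the $L^{3/2}(\Omega')$-norm on both sides and apply Minkowski's integral inequality (all factors being nonnegative) to push the norm inside the double integral. Using $\||f|^2\|_{L^{3/2}}=\|f\|_{L^3}^2$, this gives
\begin{equation*}
\|1-|m_\e|^2\|_{L^{3/2}(\Omega')}\leq\tfrac{1}{2}\iint\rho_\e(u)\rho_\e(v)\,\|m(\cdot-u)-m(\cdot-v)\|_{L^3(\Omega')}^2\,du\,dv.
\end{equation*}
A translation $\tilde x=x-u$ converts the inner norm into $\|D^{u-v}m\|_{L^3(\Omega'-u)}$. Since $|u-v|\leq 2\e$ and $|u|\leq\e$, by the definition of $N_\e$—and up to absorbing the mild enlargement of $\Omega'$ by $B_\e$ and the factor $2^{1/3}$ in the implicit constant, exactly as is implicitly done in the proof of Lemma~\ref{l:lemma3.1}—this norm is bounded by $\e^{1/3}N_\e(m,\Omega')$. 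Combining with $\iint\rho_\e(u)\rho_\e(v)\,du\,dv=1$ produces
\begin{equation*}
\|1-|m_\e|^2\|_{L^{3/2}(\Omega')}\lesssim\e^{2/3}N_\e(m,\Omega')^2,
\end{equation*}
and raising to the power $3/2$ gives the claim.

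There is no real conceptual obstacle: the whole argument is driven by the opening identity, which is essentially the observation that $1-|m_\e|^2$ measures the mean squared oscillation of $m$ on scale $\e$ precisely because $m$ takes values in $S^1$. The only bookkeeping is the domain enlargement in the translation step and the comparison between $N_{2\e}$ and $N_\e$, both of which are handled by requiring $\e\ll\dist(\Omega',\partial\Omega)$ and absorbing numerical constants in $\lesssim$, as in Lemma~\ref{l:lemma3.1}.
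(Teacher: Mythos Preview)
Your proof is correct and rests on the same identity as the paper's, namely $1-|m_\e(x)|^2=\tfrac12\iint\rho_\e(x-y)\rho_\e(x-z)|m(y)-m(z)|^2\,dy\,dz$. The only difference in execution is that the paper inserts the pointwise triangle inequality $|m(y)-m(z)|^2\lesssim|m(y)-m(x)|^2+|m(z)-m(x)|^2$ \emph{before} integrating (reducing to a single convolution $\int\rho_\e(z)|D^z m(x)|^2\,dz$ and then applying Jensen), which keeps the increment at scale $\e$ and the integration domain at $\Omega'$ throughout; this sidesteps the enlargement and $2\e$-vs-$\e$ bookkeeping you flag---bookkeeping which, incidentally, is \emph{not} present in the proof of Lemma~\ref{l:lemma3.1} either, contrary to your parenthetical remark.
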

\begin{proof}
As in \cite[Proof of Proposition~3, Step~6(i)]{delellisignat15}, using that $|m|=1$ almost everywhere we obtain the pointwise bound
\begin{multline*}
|1-|m_\e|^2|(x) \lesssim \int_{B_\e(x)}\int_{B_\e(x)} \rho_\e(x-y)\rho_\e(x-z)|m(y)-m(z)|^2dydz \\
\lesssim  
\int_{B_\e(x)} \rho_\e(x-y)|m(y)-m(x)|^2dy
= \int_{B_\e(0)} \rho_\e(z)|m(x+z)-m(x)|^2dz
\end{multline*}
Then by H\"older's inequality
\begin{align*}
\int_{\Omega'}|(1-|m_\e(x)|^2)|^\frac{3}{2}dx & \lesssim 
\int_{\Omega'}  \int_{B_\e(0)} \rho_\e(z)|D^z m(x)|^3dz dx \\
& \leq  \sup_{|z|\leq \e} \| D^z m\|^3_{L^3(\Omega')}\\
&\leq \e N_{\e}(m,\Omega')^3.
\end{align*}
\end{proof}

To conclude the proof of Theorem~\ref{t:main} we remark that the implication (iv)$\Rightarrow$(i) is trivial.

\section{Corollaries and further comments}\label{s:cor}

\subsection{Sharp differentiability for zero energy states}
We observe that if $m\in B^{1/3}_{3,c_0,{\rm loc}}(\Omega)$ (in particular when $m\in B^{1/3}_{3,q,{\rm loc}}(\Omega)$, $q<\infty$), 
 then thanks to Lemmas~\ref{l:lemma3.1} and \ref{l:lemma3.2} we have 
\[
\e^{\frac 23}\nabla m_\e\rightarrow 0 \quad\text{in}\quad L^3_{{\rm loc}}(\Omega)\text{ and }\quad \e^{-\frac 23}(1-|m_\e|^2)\rightarrow 0\quad\text{in}\quad L^{\frac 32}_{{\rm loc}}(\Omega).
\]
Therefore the conclusion of Proposition~\ref{p:3} can be refined to 
\[
\|{\nabla}\cdot\Phi(m)\|=0\qquad\text{ for every }\Phi\in ENT.
\]
That is, slightly better regularity rules out entropy production. This in turn implies much stronger regularity properties: $ m$ is locally Lipschitz outside a locally finite set of vortices, \cite{jabinottoperthame02}.

\subsection{The mass of the entropy measure $\|\sigma\|(\Omega)$}
In the micromagnetics model studied by Rivi\`ere-Serfaty in \cite{riviereserfaty03}, twice the total variation of the kinetic measure provides a sharp asymptotic lower bound 
for the energy, \cite[Theorem 1]{riviereserfaty03}.
In this paragraph we investigate whether this property holds for our model \eqref{eq:m}, at least in the $BV$ case. Recall \cite{contidelellis07} that for $m=\nabla^\perp u\in BV(\Omega)$ satisfying \eqref{eq:m} it holds
\begin{equation*}
\big(\Gamma-\lim AG_\e\big)(u)=\frac 13 \int_{J_m}\abs{m^+-m^-}^3 d\mathcal H^1.
\end{equation*}
Hence the question we raise is whether this equals $2\norm{\sigma}(\Omega)$.

For a given $m\in BV(\Omega)$ satisfying \eqref{eq:m} we compute $\norm{\sigma}$ as follows.
In light of Lemma~\ref{l:sigma} it holds
\begin{equation*}
\|\sigma\| =\bigvee_{|f|\leq 1}\|\nabla\cdot\Phi_f(m)\|.
\end{equation*}
On the other hand, Remark~\ref{r:frames} and the results in \cite{ambdelman99} ensure that
\begin{equation}\label{eq:masssigma}
\begin{aligned}
2\norm{\sigma} & =
2 \bigvee_{|f|\leq 1}\|\nabla\cdot\Phi_f(m)\|\\
& \geq  \bigvee_{(\alpha_1,\alpha_2)}\|\nabla\cdot\Sigma_{\alpha_1,\alpha_2}(m)\| = \frac 13 |m^+ - m^-|^3\mathcal H^1\res J_{m}.
 \end{aligned}
\end{equation}

\begin{proposition}
If $D m$ has a nontrivial jump part, then the inequality in \eqref{eq:masssigma} is strict. 
\end{proposition}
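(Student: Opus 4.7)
The plan is to compare the densities of the two measures appearing in \eqref{eq:masssigma} with respect to $\mathcal{H}^1\res J_m$, and to show that the density of $2\|\sigma\|$ strictly exceeds that of $\frac{1}{3}|m^+-m^-|^3\mathcal{H}^1\res J_m$ at $\mathcal{H}^1$-a.e.\ jump point. Since $\mathcal{H}^1(J_m)>0$ by hypothesis, integration over $J_m$ then yields the desired strict measure inequality.

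At $\mathcal{H}^1$-a.e.\ $x\in J_m$ with jump values $m^\pm$ and normal $\nu$, the Rankine--Hugoniot condition $(m^+-m^-)\cdot\nu=0$ inherited from $\nabla\cdot m=0$ forces $m^+$ and $m^-$ to be mirror images across $\nu$; by $SO(2)$-invariance I may assume $\nu=(0,1)$ and $m^\pm=e^{i(\pi/2\pm\beta)}$ for some $\beta\in(0,\pi/2]$, so that $|m^+-m^-|=2\sin\beta$. The jump density of $\nabla\cdot\Phi_f(m)$ is $L_\beta(f):=(\Phi_f(m^+)-\Phi_f(m^-))\cdot\nu$. Using the integral expression $\Phi_f(e^{i\alpha})=\int_{\alpha-\pi/2}^{\alpha+\pi/2}\psi_f(t)e^{it}\,dt$ from the proof of Lemma~\ref{l:linknuPhif}, dotting with $\nu$, and integrating by parts using that $g(t):=\psi_f(t)+\psi_f(t+\pi)$ is $\pi$-periodic with $g'(t)=2(f_e(t)-c_0)$ (where $f_e$ is the sum of the even-frequency Fourier modes of $f$, i.e.\ its $\pi$-periodic part, and $c_0$ its mean), one obtains
\[
L_\beta(f)=\int_{-\pi/2}^{\pi/2}f_e(t)\,\phi_\beta(t)\,dt,\quad \phi_\beta(t):=-2(\cos t-\cos\beta)\chi_{[-\beta,\beta]}(t)+\frac{2c_\beta}{\pi},
\]
with $c_\beta:=2\sin\beta-2\beta\cos\beta>0$ and $\int\phi_\beta=0$. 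On the cubic entropies a short trigonometric computation yields $L_\beta(-\cos 2(\cdot-\theta))=\tfrac{4}{3}\sin^3\beta\cdot\cos 2\theta$, whose supremum in $\theta$ is $\tfrac{4}{3}\sin^3\beta$, giving the density $\tfrac{1}{3}|m^+-m^-|^3$ on the right-hand side of \eqref{eq:masssigma}.

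The strict inequality then follows from a bang-bang argument. Since $f_o$ does not enter $L_\beta$, we may set $f_o\equiv 0$ and let $f_e$ range over all $\pi$-periodic functions with $|f_e|\leq 1$; $L^\infty$--$L^1$ duality gives $\sup_{|f|\leq 1}L_\beta(f)=\int|\phi_\beta|$, attained only at $f_e=\mathrm{sign}(\phi_\beta)$ almost everywhere on $\{\phi_\beta\neq 0\}$. For every $\beta\in(0,\pi/2]$ the elementary inequality $c_\beta<\pi(1-\cos\beta)$ (verified at $\beta=0$ and $\beta=\pi/2$ and by a derivative comparison in between) gives $\phi_\beta(0)<0$, while $\phi_\beta=2c_\beta/\pi>0$ on $[-\pi/2,-\beta)\cup(\beta,\pi/2]$; hence $\mathrm{sign}(\phi_\beta)$ is a genuine step function. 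Each cosine $-\cos 2(\cdot-\theta)$ attains the values $\pm 1$ only on a finite set of $t$'s, so it disagrees with $\mathrm{sign}(\phi_\beta)$ on a positive-measure subset of $\{\phi_\beta\neq 0\}$, whence $L_\beta(-\cos 2(\cdot-\theta))<\int|\phi_\beta|$ strictly for every $\theta$. Taking the supremum in $\theta$ (attained at $\theta=0$) preserves the strict inequality, and multiplying by $2$ produces the strict inequality between densities on $J_m$. The main obstacle is the explicit derivation of the formula for $L_\beta(f)$ and the identification of $\phi_\beta$ via integration by parts and the $f_e/f_o$ splitting; once in hand, the structural dichotomy between bang-bang maximizers and smooth cosines makes the strict improvement over the Jin--Kohn cubic entropies transparent.
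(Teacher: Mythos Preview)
Your proof is correct and follows the same strategy as the paper: reduce to a density computation on $J_m$, express the jump density as a linear functional $L_\beta(f)=\int f\, g_\beta$ (your $\phi_\beta$ coincides with the paper's $g_\beta$ up to a rotation of the frame and restriction to one period), and conclude since $\sup_{|f|\leq 1}L_\beta(f)=\|g_\beta\|_{L^1}$ is attained only at the discontinuous $\mathrm{sign}(g_\beta)$ and hence strictly exceeds what the Jin--Kohn cosines produce. Your derivation of the integral formula via the $\pi$-periodic part $f_e$ and the auxiliary $g=\psi_f+\psi_f(\cdot+\pi)$ is a cleaner alternative to the paper's lengthy direct expansion of $\Phi_f$; the one step you leave implicit, and which the paper handles by invoking \cite[Theorem~3]{ignatmerlet12}, is that the density of the least upper bound measure $\bigvee_{|f|\leq 1}\|\nabla\cdot\Phi_f(m)\|$ with respect to $\mathcal H^1\res J_m$ is indeed the pointwise supremum $\sup_{|f|\leq 1}|L_\beta(f)|$.
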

\begin{proof}
According to \cite[Theorem~3]{ignatmerlet12}, since the set $\lbrace \Phi_f \colon\abs{f}\leq 1\rbrace$ is symmetric (stable under multiplication by $-1$) and equivariant (stable under conjugation by any rotation), it holds
\begin{equation}\label{eq:entropycost}
\bigvee_{|f|\leq 1}\|\nabla\cdot\Phi_f(m)\| = c(\abs{m^+-m^-})\mathcal H^1\res J_m.
\end{equation}
for a certain cost function $c$. This cost function is given by
\begin{equation*}
c(s)=\sup\left\lbrace \left(\Phi_f(m^+)-\Phi_f(m^-)\right)\cdot\nu\right\rbrace,
\end{equation*}
where the supremum is taken among :
\begin{itemize}
\item all possible jumps $m^\pm\in S^1$ of size $\abs{m^+-m^-}=s$, 
\item all possible normal vectors $\nu\in S^1$ with the admissibility condition $(m^+-m^-)\cdot\nu=0$ (due to the divergence constraint $\nabla\cdot m=0$),
\item  and all possible $f$ with $\abs{f}\leq 1$.
\end{itemize}
Using again the symmetry and equivariance of $\lbrace \Phi_f\rbrace$, we can simplify this as
\begin{equation*}
c(s)=\sup\left\lbrace \left(\Phi_f(e^{i\beta})-\Phi_f(e^{-i\beta})\right)\cdot e_1,\: \abs{f}\leq 1\right\rbrace\quad\text{for }s=2\sin\beta,\:\beta\in [0,\pi/2].
\end{equation*}
For  angles  $\beta\in [-\pi/2,\pi/2]$ (we want to apply this computation also to $-\beta$) it holds
\begin{align*}
e_1\cdot\Phi_f(e^{i\beta})&=\mathfrak{Re}(-i\varphi_f(\beta-\pi/2)+i\varphi_f(\beta+\pi/2))\\
&=\mathfrak{Re}\left(\int_0^{\beta-\pi/2}\psi_f(s)e^{is}\,ds -\int_0^{\beta+\pi/2}\psi_f(s)e^{is}\,ds\right)\\
&=-\int_{\beta-\pi/2}^{\beta+\pi/2}\psi_f(s)\cos s\, ds
=-\left[ \psi_f\sin\right]_{\beta-\pi/2}^{\beta+\pi/2}+\int_{\beta-\pi/2}^{\beta+\pi/2}\tilde f(s)\sin s\, ds\\
&=-\cos\beta \left(\int_0^{\beta+\pi/2}\tilde f + \int_0^{\beta-\pi/2}\tilde f \right) +\int_{\beta-\pi/2}^{\beta+\pi/2}\tilde f \sin \\
& = -\cos\beta\Bigg[ \int_0^{\beta+\pi/2}f -\frac{1}{2\pi}(\beta+\pi/2)\int_0^{2\pi}f -\frac{1}{\pi}\left(\int_0^{\beta+\pi/2}\cos\right)\int_0^{2\pi}f\cos \\
&\qquad\quad\qquad - \frac{1}{\pi}\left(\int_0^{\beta+\pi/2}\sin\right)\int_0^{2\pi}f\sin + \int_0^{\beta-\pi/2} f -\frac{1}{2\pi}(\beta-\pi/2)\int_0^{2\pi} f \\
&\quad\qquad\qquad -\frac{1}{\pi}\left(\int_0^{\beta-\pi/2}\cos\right)\int_0^{2\pi} f\cos -\frac{1}{\pi}\left(\int_0^{\beta-\pi/2}\sin\right)\int_0^{2\pi} f\sin \Bigg] \\
&\quad +\int_{\beta-\pi/2}^{\beta+\pi/2}f\sin - \frac{1}{2\pi}\left(\int_{\beta-\pi/2}^{\beta+\pi/2}\sin\right)\int_0^{2\pi} f\\
&\quad -\frac{1}{\pi}\left(\int_{\beta-\pi/2}^{\beta+\pi/2}\cos\sin\right)\int_0^{2\pi} f\cos-\frac{1}{\pi}\left(\int_{\beta-\pi/2}^{\beta+\pi/2}\sin^2\right)\int_0^{2\pi} f\sin \\
&=-\cos\beta\left[
\int_0^{\beta+\pi/2}f+\int_0^{\beta-\pi/2}f -\frac{\beta}{\pi}\int_0^{2\pi}f -\frac{2}{\pi}\int_0^{2\pi}f\sin\right]\\
&\quad
+\int_{\beta-\pi/2}^{\beta+\pi/2}f\sin -\frac{1}{\pi}\sin\beta \int_0^{2\pi}f -\frac 12\int_0^{2\pi} f\sin.
\end{align*}
Hence for any $\beta\in [0,\pi/2]$,
\begin{align*}
e_1\cdot\left(\Phi_f(e^{i\beta})-\Phi_f(e^{-i\beta})\right)
&=\cos\beta\left[
\int_0^{-\beta+\pi/2} f + \int_0^{-\beta-\pi/2}f -\int_0^{\beta+\pi/2}f -\int_0^{\beta-\pi/2} f + \frac{2\beta}{\pi}\int_0^{2\pi} f
\right]\\
&\quad + \int_{\beta-\pi/2}^{\beta+\pi/2}f\sin -\int_{-\beta-\pi/2}^{-\beta+\pi/2}f\sin -\frac 2\pi \sin\beta\int_0^{2\pi} f \\
& =-\cos\beta\int_{-\beta-\pi/2}^{\beta-\pi/2}f -\cos\beta\int_{-\beta+\pi/2}^{\beta+\pi/2} f + \int_{-\beta+\pi/2}^{\beta+\pi/2}f\sin -\int_{-\beta-\pi/2}^{\beta-\pi/2}f\sin \\
&\quad -\frac{2}{\pi}(\sin\beta-\beta\cos\beta)\int_0^{2\pi}f \\
&=\int_{0}^{2\pi} g_\beta f,
\end{align*}
where $g_\beta$ is $\pi$-periodic and
\begin{align*}
g_\beta(t)&=(\sin t-\cos\beta)\one_{\pi/2-\beta\leq t\leq \pi/2+\beta} - \frac{2}{\pi}(\sin\beta-\beta\cos\beta)\qquad\forall t\in [0,\pi].
\end{align*}
The above computation with $f(t)=\cos(2t)$ yields an entropy production equal to $(2\sin\beta)^3/6$, as expected. On the other hand the supremum of the above quantity over  $\abs{f}\leq 1$ is given by $\norm{g_\beta}_{L^1(0,2\pi)}$. This supremum is not attained  by a continuous function when $\beta>0$. 
In other words, for any jump of size $s>0$ we have $c(s)>s^3/6$. In view of \eqref{eq:entropycost} this shows that equality in \eqref{eq:masssigma} can not happen unless $Dm$ has a trivial jump part.
\end{proof}

To calculate the value of $c(s)$ we observe the following. 
Since $g_\beta$ is $\pi$-periodic and even it holds
\begin{equation*}
\norm{g_\beta}_{L^1(0,2\pi)}=4\int_0^{\pi/2}\abs{g_\beta}.
\end{equation*}
The function $g_\beta$  is negative in $[0,t_\beta)$ and positive in $(t_\beta,\pi/2]$, where $t_\beta\in [\pi/2-\beta,\pi/2]$ is characterized by
\begin{equation*}
\sin t_\beta -\cos\beta = \frac{2}{\pi}(\sin\beta-\beta\cos\beta).
\end{equation*}
Moreover it holds that $\int_0^{\pi/2} g_\beta =0$, hence we find
\begin{align*}
\int_0^{\pi/2}\abs{g_\beta}&=\int_0^{t_\beta}(-g_\beta) + \int_{t_\beta}^{\pi/2} g_\beta =2\int_{t_\beta}^{\pi/2} g_\beta \\
& = 2\cos t_\beta -2(\pi/2- t_\beta)\left(\cos\beta + \frac 2\pi (\sin\beta-\beta\cos\beta)\right).
\end{align*}
With this expression it can be checked that 
\begin{equation*}
\norm{g_\beta}_{L^1(0,2\pi)}\sim \frac 16 (2\beta)^3\qquad\text{as }\beta\to 0,
\end{equation*}
hence $c(s)\sim s^3/6$ for $s\to 0$, so that the measure $\norm{\sigma}$ does behave like the right-hand side of \eqref{eq:masssigma} for very small jumps.

\subsection{Partial regularity obtained by using only the Jin-Kohn entropies}

In this paragraph, we show how to obtain fractional differentiability of a solution $m$ of \eqref{eq:m} having 
finite entropy production for every entropy \eqref{eq:jk} in the class of Jin-Kohn: 
\[
\Sigma_{\alpha_1,\alpha_2}(z) = \frac 43\left((z\cdot\alpha_2)^3\alpha_1 + (z\cdot\alpha_1)^3\alpha_2\right). 
\]
Recall that $(\alpha_1,\alpha_2)$ is a positive orthonormal frame $(R_\theta e_1,R_\theta e_2)$, 
and notice moreover that every entropy is a linear combination of two basic entropies
 $\Sigma_{e_1,e_2}$ and $\Sigma_{\e_1,\e_2}$:
 \[
 \Sigma_{R_\theta e_1,R_\theta e_2}(z)=  \cos(2\theta)\Sigma_{e_1,e_2}(z) + \sin(2\theta) \Sigma_{\e_1,\e_2}(z),
 \]
 where $\e_1 = \tfrac{e_1 + e_2}{\sqrt{2}}$, $\e_1 = \tfrac{-e_1 + e_2}{\sqrt{2}}$. 
 
In \cite{lorentpeng17} the authors show that whenever the entropy production associated to $\Sigma_{e_1,e_2}$ and $\Sigma_{\e_1,\e_2}$ vanish (which is equivalent to all the Jin-Kohn entropy productions vanishing), then in fact all entropy productions vanish and the rigidity result of \cite{jabinottoperthame02} applies.  Hence it is natural to wonder whether in general, controlling the total variation of these two basic entropy productions is enough to obtain the $B^{1/3}_{3,\infty}$ estimate (which we obtained here using all entropy productions). We do not provide an answer to this question, but show how a method described in \cite{golse10} can be combined with estimates derived in \cite{lorentpeng17} to obtain a $B^{s}_{4,\infty}$ estimate for all $s<1/4$.

To this end we set
\begin{align*}
\Delta_{JK} (x,h,e)&= 
D^h_e \Sigma_{e_1,e_2}(m(x))
 \wedge
D^h_e \Sigma_{\e_1,\e_2}(m(x))\\
& = \det
D^h_e 
\left(
\Sigma_{e_1,e_2}(m), \Sigma_{\e_1,\e_2}(m)
\right)(x).
\end{align*}
Here we recall that $D^h_e$ denotes the spatial increment of size $h$ in direction $e$, that is $D^h_e f(x)=f(x+he)-f(x)$.
In \cite{lorentpeng17} the authors study some properties of the set $K$ of $2\times 2$ matrices given by
\begin{equation*}
K=\left\lbrace \left(\Sigma_{e_1,e_2}(m),\Sigma_{\e_1,\e_2}(m)\right)\colon m\in S^1\right\rbrace \subset \R^{2\times 2}.
\end{equation*}
One of its key properties, obtained in \cite[Lemma 7]{lorentpeng17} and inspired 
from the work of \v{S}verak on the Tartar conjecture \cite{sverak93}, is the following inequality:
\[
  \det( X- Y) \gtrsim |X-Y|^4\qquad\forall  (X,Y) \in  K\times K  . 
\]
Therefore the quantity $\Delta_{JK}$ can be estimated from below by  
\[
\Delta_{JK} (x,h,e) \gtrsim \abs{D_e^h\left(
\Sigma_{e_1,e_2}(m), \Sigma_{\e_1,\e_2}(m)
\right)(x)}^4\gtrsim |D^h_e m(x)|^4,
\]
where the last inequality follows from the (easily checkable) fact that $m\mapsto (\Sigma_{e_1,e_2},\Sigma_{\e_1,\e_2})$ is an immersion. 

Following \cite{golse10}, we aim to apply the div-curl Lemma, taking advantage of the fact that 
\[
\nabla\cdot \Sigma_{e_1,e_2}(m) = \mu_{e_1,e_2},\qquad \nabla\cdot \Sigma_{\e_1,\e_2}(m) = \mu_{\e_1,\e_2},
\]
are locally finite measures.
To this end let us fix $\chi$ a smooth cutoff function and set
\[
E:=\chi D^h_e  \Sigma_{e_1,e_2}(m), \qquad B:=\chi D^h_e  \Sigma_{\e_1,\e_2}(m). 
\]
\begin{lemma}\label{l:divcurl} For every $p\in ]1,\infty[$ the following estimate holds true:
\begin{equation}\label{eq:divcurl}
\int_{\R^2} E\wedge B \,dx \lesssim pp' 
\|E \|_{L^p} \|\nabla\cdot B \|_{W^{-1,p'}} +
\|B \|_{L^p} \|\nabla\cdot  E \|_{W^{-1,p'}} .
\end{equation}
\end{lemma}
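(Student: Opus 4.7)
The approach is a Helmholtz decomposition combined with an algebraic cancellation that kills the ``curl'' contributions, followed by a Calderón--Zygmund bound on the stream functions.

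Since $E$ and $B$ have compact support (thanks to the cutoff $\chi$), I would begin by writing the Hodge/Helmholtz decompositions on $\R^{2}$,
\begin{equation*}
E=\nabla\tilde f+\nabla^{\perp}\tilde g,\qquad B=\nabla f+\nabla^{\perp}g,
\end{equation*}
with $\Delta\tilde f=\nabla\cdot E$, $\Delta\tilde g=\nabla^{\perp}\!\cdot E$, $\Delta f=\nabla\cdot B$, $\Delta g=\nabla^{\perp}\!\cdot B$. Expanding $E\wedge B$ in this decomposition, the two ``pure'' terms $\nabla\tilde f\wedge\nabla f$ and $\nabla^{\perp}\tilde g\wedge\nabla^{\perp}g$ are null Lagrangians (Jacobian determinants) and therefore integrate to zero. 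What remains is
\begin{equation*}
\int_{\R^{2}}E\wedge B\,dx=\int\nabla\tilde f\cdot\nabla g\,dx-\int\nabla\tilde g\cdot\nabla f\,dx.
\end{equation*}

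Next I would integrate by parts, placing the Laplacian on the \emph{divergence} potentials rather than on the stream functions, which produces exactly the right-hand side:
\begin{equation*}
\int\nabla\tilde f\cdot\nabla g=-\int g\,\nabla\cdot E,\qquad \int\nabla\tilde g\cdot\nabla f=-\int\tilde g\,\nabla\cdot B,
\end{equation*}
so that
\begin{equation*}
\int_{\R^{2}}E\wedge B\,dx=\int\tilde g\,\nabla\cdot B\,dx-\int g\,\nabla\cdot E\,dx.
\end{equation*}
The duality pairing $W^{1,p}\times W^{-1,p'}$ then gives
\begin{equation*}
\Big|\int_{\R^{2}}E\wedge B\,dx\Big|\le \|\tilde g\|_{W^{1,p}}\|\nabla\cdot B\|_{W^{-1,p'}}+\|g\|_{W^{1,p}}\|\nabla\cdot E\|_{W^{-1,p'}}.
\end{equation*}

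To conclude it remains to bound $\|\tilde g\|_{W^{1,p}}\lesssim pp'\|E\|_{L^{p}}$ and similarly for $g$. Since $\nabla\tilde g=\nabla\Delta^{-1}(\nabla^{\perp}\!\cdot E)$ is a linear combination of second-order Riesz transforms applied to $E$, Calderón--Zygmund theory gives $\|\nabla\tilde g\|_{L^{p}}\lesssim pp'\|E\|_{L^{p}}$, with the customary dependence of the Riesz constant on $p$. The step I expect to be most delicate is controlling $\|\tilde g\|_{L^{p}(\R^{2})}$ itself: the 2D Laplacian's logarithmic fundamental solution means $\tilde g$ need not lie in $L^{p}$ a priori. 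This is where compact support is essential: fixing a ball $B_{R}$ containing $\supp E\cup\supp B$, one normalises $\tilde g$ (and $g$) by additive constants and applies Poincaré on $B_{R}$, or equivalently interprets $\|\nabla\cdot B\|_{W^{-1,p'}}$ via compactly supported test functions so that the pairing only sees $\tilde g$ restricted to $B_{R}$. Either way the pairing is controlled by $\|\nabla\tilde g\|_{L^{p}}$, and putting the pieces together yields \eqref{eq:divcurl}.
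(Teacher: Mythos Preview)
Your approach is correct and very close to the paper's own argument: both rely on a Hodge decomposition, the Calder\'on--Zygmund bound on second-order Riesz transforms with its explicit $pp'$ constant, and a Poincar\'e/localisation step exploiting that $E,B$ (hence $\nabla\cdot E,\nabla\cdot B$) are compactly supported.

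The one difference worth noting is that the paper decomposes \emph{only} $E=\nabla\phi+\nabla^\perp\psi$ and leaves $B$ intact. This yields just two terms: $\int\nabla\phi\wedge B$, estimated directly by H\"older and the bound $\|\nabla\phi\|_{L^{p'}}\lesssim pp'\|\nabla\cdot E\|_{W^{-1,p'}}$; and $\int\nabla^\perp\psi\wedge B=-\int\nabla\psi\cdot B=\int(\psi-\fint_{\spt B}\psi)\,\nabla\cdot B$, bounded via Poincar\'e and $\|\nabla\psi\|_{L^p}\lesssim pp'\|E\|_{L^p}$. Because $B$ keeps its compact support throughout, every integral is automatically over a bounded set and no separate discussion of decay at infinity is needed. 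Your symmetric decomposition of both $E$ and $B$ is equally valid but introduces the two ``pure'' Jacobian terms $\int\nabla\tilde f\wedge\nabla f$ and $\int\nabla^\perp\tilde g\wedge\nabla^\perp g$; to conclude these vanish you must check that the Newtonian potentials and their gradients decay fast enough (they do: since $\int\nabla\cdot E=\int\nabla^\perp\cdot E=0$, the potentials are $O(1/|x|)$ and their gradients $O(1/|x|^2)$, so the boundary terms on $\partial B_R$ vanish as $R\to\infty$). This is fine, but it is an extra step that the one-sided decomposition avoids.
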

\begin{proof}
The proof is nowadays standard, and we report it for the reader's convenience: for $1<p<\infty$, 
using the potential theoretic solution $\phi$ to $\Delta\phi = \nabla\cdot E$ , we find that
$E$ can be Hodge-decomposed as
\[
E=\nabla\phi+\nabla^\perp\psi,\qquad \|\nabla\phi\|_{L^{p'}(\R^2)} \lesssim  pp' \|\nabla \cdot E \|_{W^{-1,p'}(\R^2)}, 
\]
(\cite[Theorem 4.4.1]{grafakosclassical}, \cite{yudovich95}), which yields
\begin{align*}
\left| \int_{\R^2} E\wedge B \,dx \right| 
&\leq
\left|\int\nabla\phi\wedge B\right| + \left| \int\nabla^\perp\psi\wedge B\right| \\
&\lesssim
\|\nabla\phi\|_{L^{p'}} \|B \|_{L^p} + \left| \int\left(\psi - \fint_{\spt(B)}\psi\right)  \nabla\cdot B\right|  \\
&\lesssim
pp' \|\nabla \cdot E \|_{W^{-1,p'}(\R^2)} \|B \|_{L^p} +   \|\nabla\psi\|_{L^p}   \|\nabla \cdot B \|_{W^{-1,p'}(\R^2)}\\
&\lesssim 
pp' \|\nabla \cdot E \|_{W^{-1,p'}(\R^2)} \|B \|_{L^p} +  (1+pp') \|E\|_{L^p}  \|\nabla \cdot B \|_{W^{-1,p'}(\R^2)}. 
\end{align*}
The conclusion follows from $pp'\geq 4$.
\end{proof}
\begin{proposition}
Any solution $m$ to \eqref{eq:m} such that $\nabla\cdot\Sigma_{\alpha_1,\alpha_2}(m)\in\mathcal M_\loc(\Omega)$ for $(\alpha_1,\alpha_2)=(e_1,e_2)$ and $(\e_1,\e_2)$, belongs to 
$B^{s}_{4,\infty;\loc}(\Omega)$ for every $s<4$.
\end{proposition}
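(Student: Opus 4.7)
The plan is to run the averaging argument of Proposition~\ref{p:2} with the Jin--Kohn quantity $\Delta_{JK}$ and the div--curl estimate \eqref{eq:divcurl} in place of the kinetic formulation and the interaction estimate. Concretely, I would fix $\chi\in C_c^\infty(\Omega)$ and $p\in(2,\infty)$, and set
\[
E:=\chi\, D^h_e\Sigma_{e_1,e_2}(m),\qquad B:=\chi\, D^h_e\Sigma_{\e_1,\e_2}(m).
\]
Multiplying the \v{S}ver\'ak--Lorent--Peng inequality $\Delta_{JK}(x,h,e)\gtrsim|D^h_e m(x)|^4$ by $\chi^2$ and integrating gives
\[
\int\chi^2 |D^h_em|^4\,dx\lesssim\int E\wedge B\,dx,
\]
to which \eqref{eq:divcurl} applies. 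Since $|m|=1$ implies $|D^h_e\Sigma_{\alpha_1,\alpha_2}(m)|\leq 2\|\Sigma_{\alpha_1,\alpha_2}\|_\infty$, the factors $\|E\|_{L^p}$ and $\|B\|_{L^p}$ reduce to constants depending only on $\chi$, $p$, $\Omega$.

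The decisive estimate is therefore the smallness, as $h\to 0$, of $\|\nabla\cdot E\|_{W^{-1,p'}}$ (and symmetrically for $B$). I would split
\[
\nabla\cdot E=\nabla\chi\cdot D^h_e\Sigma_{e_1,e_2}(m)+\chi\, D^h_e\mu_{e_1,e_2},\qquad\mu_{e_1,e_2}:=\nabla\cdot\Sigma_{e_1,e_2}(m)\in\mathcal M_\loc(\Omega),
\]
and bound the two pieces by duality against $\phi\in W^{1,p}_0$. For the measure piece, moving the translation onto the test function yields
\[
\langle\chi\, D^h_e\mu_{e_1,e_2},\phi\rangle=\int\bigl[(\phi\chi)(\cdot-he)-\phi\chi\bigr]\,d\mu_{e_1,e_2},
\]
and the 2D Morrey embedding $W^{1,p}_0\hookrightarrow C^{0,1-2/p}$ (valid precisely because $p>2$) gives
\[
\|\chi\, D^h_e\mu_{e_1,e_2}\|_{W^{-1,p'}}\lesssim |h|^{1-2/p}\,\|\mu_{e_1,e_2}\|_{\mathcal M(K)},
\]
for a compact $K\supset\spt(\chi)+B_{|h|}(0)$. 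The remaining piece $\nabla\chi\cdot D^h_e\Sigma_{e_1,e_2}(m)$ is of lower order: shifting again the translation onto $g:=\phi\nabla\chi$ and using $\|g(\cdot-he)-g\|_{L^1}\leq |h|\|\nabla g\|_{L^1}$ together with H\"older produces a $W^{-1,p'}$ bound of order $|h|$.

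Plugging these estimates into \eqref{eq:divcurl} yields $\int\chi^2|D^h_em|^4\,dx\lesssim |h|^{1-2/p}$, hence $\|D^h_em\|_{L^4(\spt\chi)}\lesssim |h|^{1/4-1/(2p)}$, so that $m\in B^{s}_{4,\infty,\loc}(\Omega)$ for every $s<\tfrac{1}{4}-\tfrac{1}{2p}$. Letting $p\to\infty$ exhausts every $s<1/4$, which is what the statement should read (the printed ``$s<4$'' being a typo). The main obstacle, and the reason for the gap with the optimal exponent $1/3$ obtained in Theorem~\ref{t:main}, sits exactly at this step: with only the two Jin--Kohn entropies at our disposal one can only embed $\mathcal M$ into $W^{-1,p'}$ through Morrey, and this costs a factor $|h|^{2/p}$ that prevents taking the limit $p=\infty$ and reaching the critical Besov scale.
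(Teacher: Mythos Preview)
Your proof is correct and follows essentially the same route as the paper: the coercivity $\Delta_{JK}\gtrsim |D^h_e m|^4$ from \cite{lorentpeng17}, the div--curl estimate \eqref{eq:divcurl}, and the Morrey embedding $W^{1,p}\hookrightarrow C^{0,1-2/p}$ to control the $W^{-1,p'}$ norm of the differenced measure. The only cosmetic difference is at the very end: you fix $p>2$, obtain $s<\tfrac14-\tfrac1{2p}$, and then let $p\to\infty$, whereas the paper takes $p=-\log|h|$ (the Yudovich trick) to get the single modulus $|h|\log(1/|h|)$ and deduce all $s<\tfrac14$ at once; both arguments are standard and yield the same Besov range. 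Your observation that the printed ``$s<4$'' is a typo for ``$s<\tfrac14$'' is of course correct.
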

\begin{proof}
The  div-curl estimate of Lemma~\ref{l:divcurl} reduces the control of $\int \chi^2 |D^h_e m|^4$ to the estimate of the product
\[
\|\chi D^h_e  \Sigma_{e_1,e_2}(m) \|_{L^p} \|\nabla\cdot (\chi D^h_e  \Sigma_{\e_1,\e_2}(m)) \|_{W^{-1,p'}}
\]
and its companion obtained by exchanging $E$ and $B$. 
Let us for simplicity drop the frame index and write $\Sigma$ instead of $\Sigma_{\alpha_1,\alpha_1}$, and also write $D^h$ instead of $D^h_e$.  We start by estimating the $W^{-1,p'}$ norm of the second factor
\[
\nabla\cdot (\chi D^h_e  \Sigma (m))=
\chi D^h\mu + D^h_e\Sigma(m)\cdot\nabla\chi.
\]
By Sobolev embedding 
$W^{1,p}\subset C^{1-\frac 2p}$ for $p>2$, it holds
\begin{align*}
\|\chi D^h\mu\| _{W^{-1,p'}}
& = \sup\left\{
\int D^h(\chi\psi)d\mu, \|\psi\|_{W^{1,p}}\leq 1
\right\} \\
&\leq 
 \sup_{\|\psi\|_{W^{1,p}}\leq 1}
\|\mu\|_{\mathcal{M}} \|D^h(\chi\psi)\|_{\infty} \\
&\lesssim
 \|\mu\|_{\mathcal{M}} |h|^{1-\frac{2}{p}},
\end{align*}
and therefore
\[
\| 
\nabla\cdot (\chi D^h_e  \Sigma (m))
\|_{W^{-1,p'}} \lesssim \|\mu\|_{\mathcal{M}} |h|^{1-\frac{2}{p}}
+|h|.
\]
The $L^{p}$ norms $\|\chi D^h_e  \Sigma  \|_{L^p} $
 are uniformly bounded. 
Inserting this estimate in \eqref{eq:divcurl},  and choosing $p=-\log({|h|})$ \cite{yudovich95}, one easily obtains the modulus of continuity
\[
\int \chi^2 |D^h_e m|^4 dx\lesssim \int E\wedge B\, dx \lesssim (1+ \|\mu\|_{\mathcal{M}})|h|\log(\frac{1}{|h|})
\]
for  $|h|<\exp(-2)$. This implies $m \in B^{s}_{4,\infty}(\spt\chi)$ for every $s<\frac 14$.  
\end{proof}
\begin{remark}
It is unclear to the authors whether the $\tfrac 14$ exponent is optimal or not. 
\end{remark}

\bibliographystyle{acm}
\bibliography{ref}

\end{document}